\theoremstyle{definition}
\newtheorem{definition}{Definition}[section]
\newtheorem{remark}[definition]{Remark}
\theoremstyle{plain}
\newtheorem{lemma}[definition]{Lemma}
\newtheorem{proposition}[definition]{Proposition}
\newtheorem{theorem}[definition]{Theorem}
\begin{document}

\title{The Cayley-Dickson Process for Dialgebras}

\author[Felipe-Sosa]{Ra\'ul Felipe-Sosa}

\address{ICIMAF (Instituto de Cibern\'etica, Matem\'atica y F\'isica), Havana, Cuba}

\email{rfs@icimaf.cu}

\author[Felipe]{Ra\'ul Felipe}

\address{CIMAT (Centro de Investigaci\'on en Matem\'aticas), Guanajuato, Mexico}

\email{raulf@cimat.mx}

\author[S\'anchez-Ortega]{Juana S\'anchez-Ortega}

\address{Department of Mathematics and Statistics, York University, Ontario, Canada}

\email{jsanchez@fields.utoronto.ca}

\author[Bremner]{Murray R. Bremner}

\address{Department of Mathematics and Statistics, University of Saskatchewan, Canada}

\email{bremner@math.usask.ca}

\author[Kinyon]{Michael K. Kinyon}

\address{Department of Mathematics, University of Denver, Colorado, USA}

\email{Michael.Kinyon@du.edu}

\subjclass[2010]{Primary 17A32. Secondary 17A20, 17A30, 17A75, 17C50, 17D05, 17D10.}

\keywords{Cayley-Dickson process, KP algorithm, dialgebras with involution, Leibniz algebras, Jordan dialgebras.}

\maketitle

\begin{abstract}
We adapt the algorithm of Kolesnikov and Pozhidaev, which converts a polynomial identity for
algebras into the corresponding identities for dialgebras, to the Cayley-Dickson
doubling process.  We obtain a generalization of this process to the setting of
dialgebras, establish some of its basic properties, and construct dialgebra analogues 
of the quaternions and octonions.
\end{abstract}


\section{Introduction}

The motivation for this paper is the recent discovery of many new varieties of 
nonassociative structures that can be regarded as ``noncommutative'' analogues of classical 
structures.  This development originated in the work of Bloh \cite{Bloh1,Bloh2} in the 1960's, 
but became much better known after the work of Loday \cite{Loday1,Loday2} 
in the early 1990's on Leibniz algebras; these structures are noncommutative versions of 
Lie algebras which satisfy the Jacobi identity but are not necessarily anticommutative.
The closely related variety of associative dialgebras, also introduced by Loday, provides 
the natural context for the 
universal enveloping algebras of Leibniz algebras: the relation between Leibniz algebras and
associative dialgebras is parallel to the relation between Lie algebras and associative
algebras.  

Ten years after Loday's definition of associative dialgebras, Liu \cite{Liu} introduced 
alternative dialgebras, the natural 
analogue of alternative algebras in the setting of structures with two operations.  Shortly 
after that, Felipe and Vel\'asquez \cite{Felipe} initiated the study of quasi-Jordan algebras  
(Jordan dialgebras), which are related to Jordan algebras as Leibniz algebras 
are to Lie algebras.  Around the same time, Kolesnikov \cite{Kolesnikov} developed a general 
method for passing from a variety of nonassociative algebras defined by polynomial identities 
to the corresponding variety of dialgebras.  This method has been simplified and formalized 
in the so-called KP algorithm \cite{BFSO}, which is a concrete realization of the white Manin 
product introduced by Vallette \cite{Vallette} by the permutad \textsf{Perm} defined by 
Chapoton \cite{Chapoton} as the Koszul dual of the operad \textsf{preLie}.  For further information on
these developments, see \cite{Baietal,BaiLiuNi,BremnerSurvey,HouBai,KolesnikovVoronin,NiBai}.

An important topic in classical algebra is the theory of composition algebras and their 
connection with quadratic forms and the eight-square theorem.
This leads to the construction, starting from the real numbers, of the complex numbers, 
quaternions, and octonions, through a doubling process which originated in the works of 
Cayley and Dickson.  For the early history of these developments, see Dickson \cite{Dickson}, 
and for the completion of the classical theory, see Albert \cite{Albert} and Schafer \cite{Schafer}.
The purpose of the present work is to determine the natural generalization of the Cayley-Dickson
process to the setting of dialgebras.


\section{Preliminaries}

\subsection{The Cayley-Dickson (CD) process}

We recall the Cayley-Dickson doubling process from Zhevlakov et al.
\cite[\S 2.2]{Zhevlakov}; see also Baez \cite[\S 2.2]{Baez}, Albuquerque and Majid \cite[\S 4]{AM}.
Let $A$ be a unital algebra over the field $\mathbb{F}$ with an involution $a \mapsto a^\ast$:
that is, a linear operator on $A$ for which $(a^\ast)^\ast = a$, $(ab)^\ast = b^\ast a^\ast$
and $a+a^\ast, a a^\ast \in \mathbb{F}$ for all $a, b \in A$.
Given $\gamma \in \mathbb{F}$, $\gamma \ne 0$, we define a bilinear product on the vector
space direct sum $A \oplus A$ as follows:
  \begin{equation}
  \label{generalgamma}
  (a,b) (c,d)  =  ( a c + \gamma d b^\ast, a^\ast d + c b ).
  \end{equation}
The identity element of $A \oplus A$ is $(1,0)$, the subspace $A \oplus \{0\}$
is a subalgebra isomorphic to $A$, and $(0,1)^2 = (\gamma,0)$; 
the endomorphism $(a,b) \mapsto (a^\ast,-b)$ is an involution.
In the special case $\gamma = -1$ we obtain
  \begin{equation}
  \label{cdprocess}
  (a,b) (c,d)  =  ( a c - d b^\ast, a^\ast d + c b ).
  \end{equation}
Starting with $A = \mathbb{R}$ (real numbers), the identity involution $a^\ast = a$,
and using $\gamma = -1$,
we obtain successively the algebras $\mathbb{C}$ (complex numbers), $\mathbb{H}$ (quaternions), 
and $\mathbb{O}$ (octonions); the process continues but only the first four 
are division algebras. 

\subsection{The Kolesnikov-Pozhidaev (KP) algorithm}

This gives a generalization to dialgebras of an arbitrary variety of 
nonassociative structures by converting a multilinear polynomial identity of degree $d$ for 
a single $n$-ary operation into a family of $d$ multilinear identities of degree $d$ for $n$ 
new $n$-ary operations.
This algorithm was introduced by Kolesnikov \cite{Kolesnikov} for identities in a binary operation,
and generalized by Pozhidaev \cite{P2} to $n$-ary operations.
We state the algorithm in general, but for our purposes we require only the 
binary version.
We consider a multilinear $n$-ary operation $\{-,\dots,-\}$, and introduce $n$ new $n$-ary operations 
$\{-,\dots,-\}_j$ distinguished by subscripts $j = 1, \dots, n$.

First, we introduce the following \textbf{0-identities} for $i, j = 1, \dots, n$ with $i \ne j$ 
and $k, \ell = 1, \dots, n$; these identities say that the new operations are interchangeable 
in argument $i$ of operation $j$ when $i \ne j$:
  \allowdisplaybreaks
  \begin{align*}
  &
  \{ a_1, \dots, a_{i-1}, \{ b_1, \cdots, b_n \}_k, a_{i+1}, \dots, a_n \}_j
  \equiv
  \\
  &
  \{ a_1, \dots, a_{i-1}, \{ b_1, \cdots, b_n \}_\ell, a_{i+1}, \dots, a_n \}_j.
  \end{align*}
  
Second, we consider a multilinear identity $I(a_1,\dots,a_d)$ of degree $d$ in the $n$-ary operation $\{-,\dots,-\}$.
We apply the following rule to each monomial of the identity; let $a_{\pi(1)} a_{\pi(2)} \dots a_{\pi(d)}$ 
be such a monomial with some placement of operation symbols where $\pi$ is a permutation of $1,\dots,d$.
For $i = 1, \dots, d$ we convert this monomial into a new monomial of the same degree in the $n$ new operations 
according to the position of the central argument $a_i$.
For each occurrence of the original operation, we have the following cases:
  \begin{itemize}
  \item
  If $a_i$ occurs in argument $j$ then $\{\dots\}$ becomes $\{\dots\}_j$.
  \item
  If $a_i$ does not occur in any argument then
    \begin{itemize}
    \item
    if $a_i$ occurs to the left of the original operation, $\{\dots\}$ becomes $\{\dots\}_1$,
    \item
    if $a_i$ occurs to the right of the original operation, $\{\dots\}$ becomes $\{\dots\}_n$.
    \end{itemize}
  \end{itemize}
The resulting new identity is called the \textbf{KP identity} corresponding to 
$I(a_1,\dots,a_d)$.

The choice of new operations, in the two subcases under the second bullet above,
gives a convenient normal form for the monomial: by the 0-identities, the subscripts 1 and $n$ 
could be replaced by any other subscripts.  Suppose that $a_i$ is the central
argument and that the identity $I(a_1,\dots,a_d)$ contains a monomial of this form:
  \[
  \{ \dots, 
  \overbrace{\{ -, \dots, - \}}^{\text{argument $i$}}, 
  \dots, 
  \overbrace{\{ \dots, a_i, \dots \}}^{\text{argument $j$}}, 
  \dots, 
  \overbrace{\{ -, \dots, - \}}^{\text{argument $k$}},
  \dots \}.
  \]
Since $a_i$ occurs in argument $j$, the outermost operation must receive subscript $j$:
  \[
  \{ \dots, \{ -, \dots, - \}, \dots, \{ \dots, a_i, \dots \}, \dots, \{ -, \dots, - \}, \dots \}_j.
  \]
Our convention above attaches subscripts $n$ and 1 to arguments $i$ and $k$ respectively:
  \[
  \{ \dots, \{ -, \dots, - \}_n, \dots, \{ \dots, a_i, \dots \}, \dots, \{ -, \dots, - \}_1, \dots \}_j.
  \]
Since these subscripts occur in arguments $i \ne j$ and $k \ne j$ of operation $j$, 
the 0-identities imply that any other subscripts would give an equivalent identity.

\subsection{Associative and nonassociative dialgebras}

We recall some basic definitions and examples.

\begin{definition}
A \textbf{dialgebra} is a vector space $A$ with two bilinear operations $A \times A \to A$,
denoted $a \dashv b$ and $a \vdash b$, and called the \textbf{left} and \textbf{right} products.
\end{definition}

\begin{definition} \label{cd5}
If we start with one binary operation but assume no polynomial identities, the 0-identities are
the \textbf{left and right bar identities},
  \begin{equation}
  \label{baridentities}
  a \dashv ( b \dashv c ) \equiv a \dashv ( b \vdash c ),
  \qquad
  ( a \dashv b ) \vdash c \equiv ( a \vdash b ) \vdash c,
  \end{equation}
which define a \textbf{0-dialgebra} (Loday \cite{Loday2, Loday3}, Kolesnikov \cite{Kolesnikov}).
\end{definition}

\begin{definition}
From the commutative identity $\{a,b\} \equiv \{b,a\}$ we obtain the KP identities
$\{a,b\}_1 \equiv \{b,a\}_2$ and $\{a,b\}_2 \equiv \{b,a\}_1$;
in standard notation, both are equivalent to $a \dashv b \equiv b \vdash a$.
Combining this with the bar identities, we obtain the definition of a
\textbf{commutative 0-dialgebra}.
\end{definition}

\begin{definition}
\cite{Loday2}
Applying the KP algorithm to associativity $\{\{a,b\},c\} \equiv \{a,\{b,c\}\}$ gives 
the definition of an \textbf{associative 0-dialgebra}: a 0-dialgebra which
satisfies \textbf{left, inner} and \textbf{right associativity}:
  \begin{equation}
  ( a \dashv b ) \dashv c \equiv a \dashv ( b \dashv c ), \;\; 
  ( a \vdash b ) \dashv c \equiv a \vdash ( b \dashv c ), \;\;
  ( a \vdash b ) \vdash c \equiv a \vdash ( b \vdash c ).
  \end{equation}
\end{definition}

Calculating the KP identities corresponding to left and right alternativity,
  \[
  (ab)c - a(bc) + (ba)c - b(ac) \equiv 0,
  \qquad
  (ab)c - a(bc) + (ac)b - a(cb) \equiv 0,
  \]
and eliminating redundant identities, gives the following three identities:
  \allowdisplaybreaks
  \begin{align*}
  &
  ( a \dashv b ) \dashv c - a \dashv ( b \dashv c )
  +
  ( c \vdash b ) \vdash a - c \vdash ( b \vdash a )
  \equiv 0,
  \\
  &
  ( a \dashv b ) \dashv c - a \dashv ( b \dashv c )
  -
  ( b \vdash c ) \vdash a + b \vdash ( c \vdash a )
  \equiv 0,
  \\
  &
  ( a \vdash b ) \dashv c - a \vdash ( b \dashv c )
  +
  ( a \vdash c ) \vdash b - a \vdash ( c \vdash b )
  \equiv 0.
  \end{align*}
  
\begin{definition}
\label{alternativedialgebra}
\cite{Liu}
An \textbf{alternative 0-dialgebra} is a 0-dialgebra satisfying
  \[
  (a,b,c)_\dashv + (c,b,a)_\vdash \equiv 0,
  \quad
  (a,b,c)_\dashv - (b,c,a)_\vdash \equiv 0,
  \quad
  (a,b,c)_\times + (a,c,b)_\vdash \equiv 0,  
  \]
where the left, inner and right associators are
  \allowdisplaybreaks
  \begin{align*}
  (a,b,c)_\dashv &= ( a \dashv b ) \dashv c - a \dashv ( b \dashv c ),
  \\
  (a,b,c)_\times &= ( a \vdash b ) \dashv c - a \vdash ( b \dashv c ),
  \\
  (a,b,c)_\vdash &= ( a \vdash b ) \vdash c - a \vdash ( b \vdash c ),  
  \end{align*}
Every associative 0-dialgebra is an alternative 0-dialgebra. 
\end{definition}


\section{Basic properties of the doubling process for dialgebras}

We start with the classical Cayley-Dickson process of equation \eqref{generalgamma}.
To simplify the calculations, we assume $\gamma = -1$, and use equation \eqref{cdprocess};
the general case can be worked out similarly.  
We apply the KP algorithm to equation \eqref{cdprocess}; 
this is a non-standard application since \eqref{cdprocess} is not,
strictly speaking, a polynomial identity.
  
\begin{definition}
In equation \eqref{cdprocess}, we first regard $(a,b)$ as central and then $(c,d)$.  
We obtain the following left and right products on the vector space $A \oplus A$:
  \allowdisplaybreaks
  \begin{align}
  (a,b) \dashv (c,d) &= ( a \dashv c - d \vdash b^\ast, a^\ast \dashv d + c \vdash b ),
  \label{CDdialgebra1}
  \\
  (a,b) \vdash (c,d) &= ( a \vdash c - d \dashv b^\ast, a^\ast \vdash d + c \dashv b ).
  \label{CDdialgebra2}
  \end{align}
Since the involution is a unary operation, we extend it as in the algebra case:
  \begin{equation}
  (a,b)^\ast = (a^\ast,-b).
  \label{CDdialgebra3}
  \end{equation}
Equations \eqref{CDdialgebra1}--\eqref{CDdialgebra3} define the \textbf{Cayley-Dickson process for dialgebras}.
The dialgebra $A \oplus A$ with these operations is the \textbf{Cayley-Dickson double} of $A$.
\end{definition}

\begin{remark}
There is another convention for the Cayley-Dickson construction. 
For an algebra $A$ with involution $\ast$ and $\gamma\in \mathbb{F}$, 
define a bilinear multiplication by
\[
(a,b)\diamond (c,d) = (ac + \gamma d^* b, bc^* + da )\,.
\]
Denoting our Cayley-Dickson multiplication by $\cdot$, 
there is an isomorphism between $(A \oplus A,\cdot)$ and $(A \oplus A,\diamond)$ given by $(a,b) \mapsto (a,b^*)$.
We show that our definition of the Cayley-Dickson process for dialgebras
does not depend on our convention for the multiplication in the doubled algebra. 
If we apply the KP algorithm using $\diamond$, we obtain the following two operations in $A\oplus A$:
  \begin{align*}
  (a,b) \dashv_{\diamond} (c,d)  
  &=
  (a\dashv c + \gamma (d^* \vdash b), b\dashv c^* + d\vdash a) 
  \\
  (a,b) \vdash_{\diamond} (c,d) 
  &=
  (a\vdash c + \gamma (d^* \dashv b), b\vdash c^* + d\dashv a)
  \end{align*}
The mapping $\phi\colon A \oplus A\to A \oplus A$ defined by $(a,b)\mapsto (a,b^*)$ gives an isomorphism of 
$(A \oplus A,\dashv,\vdash)$ with $(A \oplus A,\dashv_{\diamond},\vdash_{\diamond})$.
Indeed we compute 
  \begin{align*}
  &
  \phi( (a,b)\vdash (c,d) ) 
  =
  ( a\vdash c + \gamma d\dashv b^*, ( a^*\vdash d + c\dashv b)^* ) 
  \\
  &= 
  ( a\vdash c + \gamma d\dashv b^*, d^*\dashv a + b^*\vdash c^* ) 
  = 
  (a,b^*)\vdash_{\diamond} (c,d^*) 
  = 
  \phi(a,b)\vdash_{\diamond} \phi(c,d),
\end{align*}
and the calculation for $\dashv$ and $\dashv_{\diamond}$ is similar.
\end{remark}

\begin{definition} \label{dialgebrainvolution}
\cite{P1}
From the involution identity $(ab)^\ast \equiv b^\ast a^\ast$ we obtain the KP identities
$( a \dashv b )^\ast \equiv b^\ast \vdash a^\ast$ and
$( a \vdash b )^\ast \equiv b^\ast \dashv a^\ast$.
A \textbf{dialgebra with involution} satisfies these identities and $(a^\ast)^\ast \equiv a$.
\end{definition}

\begin{definition}
Let $A$ be a 0-dialgebra.
An element $e \in A$ is a \textbf{bar unit} if $a \dashv e = a = e \vdash a$ for all $a \in A$. 
(If $A$ has an involution and $e$ is a bar unit then so is $e^\ast$.)
An element $e \in A$ is a \textbf{bar zero} if $a \dashv e = 0 = e \vdash a$ for all $a \in A$. 
\end{definition}

\begin{lemma} \label{dibar}
Let $A$ be a dialgebra with involution.  Then $A$ satisfies the left bar identity 
if and only if it satisfies the right bar identity. 
\end{lemma}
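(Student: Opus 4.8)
The plan is to exploit the fact that the involution of Definition \ref{dialgebrainvolution} interchanges the two products $\dashv$ and $\vdash$, so that it ought to carry the left bar identity onto the right bar identity and vice versa. Concretely, I would apply the involution to both sides of the left bar identity and simplify using the two KP identities $(x \dashv y)^\ast \equiv y^\ast \vdash x^\ast$ and $(x \vdash y)^\ast \equiv y^\ast \dashv x^\ast$, together with $(x^\ast)^\ast \equiv x$.

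First I would start from the left bar identity $a \dashv (b \dashv c) \equiv a \dashv (b \vdash c)$ and apply $\ast$ to each side. Using $(x \dashv y)^\ast \equiv y^\ast \vdash x^\ast$ on the outermost product (with $y = b \dashv c$ on the left and $y = b \vdash c$ on the right) rewrites the two sides as $(b \dashv c)^\ast \vdash a^\ast$ and $(b \vdash c)^\ast \vdash a^\ast$. Applying the appropriate involution identity to each inner product then yields $(c^\ast \vdash b^\ast) \vdash a^\ast \equiv (c^\ast \dashv b^\ast) \vdash a^\ast$.

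The key step is to observe that, since $(x^\ast)^\ast \equiv x$, the involution is a bijective linear operator on $A$; hence every element of $A$ has the form $x^\ast$, and we may substitute arbitrary elements of $A$ for $a^\ast$, $b^\ast$, $c^\ast$. Renaming these as $z$, $y$, $x$ respectively turns the displayed identity into $(x \vdash y) \vdash z \equiv (x \dashv y) \vdash z$, which is precisely the right bar identity. The converse direction follows by the identical argument applied to the right bar identity, or more simply by applying $\ast$ once more and invoking its involutivity, so the two implications are really the same computation run in opposite directions.

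I do not expect a genuine obstacle here: the argument is essentially a direct computation. The only point that requires care is the surjectivity of the involution, since it is this surjectivity that licenses the passage from an identity known to hold for all elements of the form $x^\ast$ to one holding for all elements of $A$. As surjectivity is an immediate consequence of $(x^\ast)^\ast \equiv x$, no further work is needed, and the linearity of the products and of the involution ensures that the equalities of multilinear expressions are preserved throughout.
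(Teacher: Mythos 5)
Your proof is correct and follows essentially the same route as the paper: both arguments apply the involution, use the identities $(x \dashv y)^\ast \equiv y^\ast \vdash x^\ast$ and $(x \vdash y)^\ast \equiv y^\ast \dashv x^\ast$ to swap the two products, and conclude via the bijectivity of $\ast$. The paper merely packages the computation as a single chain of equalities instead of applying $\ast$ to both sides and renaming variables, so the two proofs are the same in substance.
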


\begin{proof}
If $A$ satisfies the left bar identity then it satisfies the right bar identity:
\[
a \dashv ( b \dashv c )
\equiv
(
( c^\ast \vdash b^\ast ) \vdash a^\ast
)^\ast
\equiv
(
( c^\ast \dashv b^\ast ) \vdash a^\ast
)^\ast
\equiv
a \dashv ( b \vdash c ).
\]
The converse is similar.
\end{proof}

\begin{proposition} \label{AAinvolution}
If $A$ is a 0-dialgebra with involution then equations \eqref{CDdialgebra1}--\eqref{CDdialgebra3}
make $A \oplus A$ into a 0-dialgebra with involution.
\end{proposition}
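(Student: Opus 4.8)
The plan is to verify the two parts of the claim separately: first that the map $(a,b) \mapsto (a^\ast,-b)$ of \eqref{CDdialgebra3} is an involution of the dialgebra $(A \oplus A, \dashv, \vdash)$ in the sense of Definition~\ref{dialgebrainvolution}, and second that $A \oplus A$ satisfies the bar identities \eqref{baridentities}. A useful economy is that these two parts interact: once the involution is in place, Lemma~\ref{dibar} tells us that it suffices to check only one of the two bar identities for $A \oplus A$, since the other is then automatic.

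First I would dispatch the involution. The identity $((a,b)^\ast)^\ast = (a,b)$ is immediate from $(a^\ast)^\ast = a$ in $A$. For the anti-automorphism identities, I would expand $((a,b) \dashv (c,d))^\ast$ by applying $\ast$ to each component of \eqref{CDdialgebra1} and then using the two involution identities $(x \dashv y)^\ast \equiv y^\ast \vdash x^\ast$ and $(x \vdash y)^\ast \equiv y^\ast \dashv x^\ast$ of $A$ inside the first component. Separately I would expand $(c,d)^\ast \vdash (a,b)^\ast = (c^\ast,-d) \vdash (a^\ast,-b)$ using \eqref{CDdialgebra2}, and check that the two results agree componentwise; this establishes $((a,b) \dashv (c,d))^\ast \equiv (c,d)^\ast \vdash (a,b)^\ast$. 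The analogous computation converts $((a,b) \vdash (c,d))^\ast$ into $(c,d)^\ast \dashv (a,b)^\ast$.

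Next I would verify the left bar identity for $A \oplus A$, namely $(a,b) \dashv ((c,d) \dashv (e,f)) \equiv (a,b) \dashv ((c,d) \vdash (e,f))$. Writing the two inner products out via \eqref{CDdialgebra1}--\eqref{CDdialgebra2}, one sees that they differ only in that each inner occurrence of $\dashv$ is replaced by $\vdash$ and vice versa, while the involution-twisted elements $b^\ast, d^\ast$ are treated as opaque elements of $A$. After substituting into the outer $\dashv$ and expanding, the difference of the two sides splits, in each of the two components, into a sum of expressions of the two shapes
\[
x \dashv ( y \dashv z ) - x \dashv ( y \vdash z ), \qquad ( x \dashv y ) \vdash z - ( x \vdash y ) \vdash z,
\]
each of which vanishes by the left, respectively right, bar identity in $A$. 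Lemma~\ref{dibar} then supplies the right bar identity for free.

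I expect the main obstacle to be the sign bookkeeping in the involution computation. The involution \eqref{CDdialgebra3} carries a minus sign in its second component, and the Cayley-Dickson products \eqref{CDdialgebra1}--\eqref{CDdialgebra2} each carry a minus sign as well; matching $((a,b)\dashv(c,d))^\ast$ against $(c,d)^\ast \vdash (a,b)^\ast$ requires these signs, together with the swaps $\dashv \leftrightarrow \vdash$ and the maps $x \mapsto x^\ast$ produced by the anti-automorphism identities of $A$, to cancel in exactly the right pattern. The bar-identity step, by contrast, uses only the bar identities of $A$ and is a routine term-by-term match.
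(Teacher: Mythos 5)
Your proposal is correct and follows essentially the same route as the paper: verify the involution identities componentwise using the involution of $A$, check the left bar identity directly by expanding and applying the bar identities of $A$ term by term, and invoke Lemma~\ref{dibar} to obtain the right bar identity. Your observation that the two inner products differ only by swapping $\dashv$ and $\vdash$, so the difference collapses into instances of the bar identities of $A$, is exactly what the paper's displayed computation carries out.
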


\begin{proof}
We first show that $A \oplus A$ satisfies the involution identities:
  \begin{align*}
  &
  \big( (a,b)^\ast \big)^\ast 
  = 
  (a^\ast,-b)^\ast = \big( (a^\ast)^\ast, -(-b) \big) = (a,b),
  \\
  &
  \big( (a,b) \dashv (c,d) \big)^\ast
  =
  \big( a \dashv c - d \vdash b^\ast, a^\ast \dashv d + c \vdash b \big)^\ast
  \\
  &=
  \big( ( a \dashv c - d \vdash b^\ast )^\ast, -( a^\ast \dashv d + c \vdash b ) \big)
  =
  ( c^\ast \vdash a^\ast - b \dashv d^\ast, - c \vdash b - a^\ast \dashv d )
  \\
  &=
  (c^\ast,-d) \vdash (a^\ast,-b)
  =  
  (c,d)^\ast \vdash (a,b)^\ast,
  \end{align*}
and the calculation for $\big( (a,b) \vdash (c,d) \big)^\ast$ is similar.
To prove the left bar identity:
  \allowdisplaybreaks
  \begin{align*}
  &
  (a,b) \dashv \big( (c,d) \dashv (e,f) \big)
  =
  (a,b) \dashv ( c \dashv e - f \vdash d^\ast, c^\ast \dashv f + e \vdash d )
  \\
  &=
  \big( \,
  a \dashv ( c \dashv e - f \vdash d^\ast ) - ( c^\ast \dashv f + e \vdash d ) \vdash b^\ast, 
  \\
  &\quad\quad\quad
  a^\ast \dashv ( c^\ast \dashv f + e \vdash d ) + ( c \dashv e - f \vdash d^\ast ) \vdash b 
  \, \big)
  \\
  &=
  \big( \,
  a \dashv ( c \vdash e - f \dashv d^\ast ) - ( c^\ast \vdash f + e \dashv d ) \vdash b^\ast, 
  \\
  &\quad\quad\quad
  a^\ast \dashv ( c^\ast \vdash f + e \dashv d ) + ( c \vdash e - f \dashv d^\ast ) \vdash b 
  \, \big) 
  \\
  &=
  (a,b) \dashv ( c \vdash e - f \dashv d^\ast, c^\ast \vdash f + e \dashv d )
  =
  (a,b) \dashv ( (c,d) \vdash (e,f) ).
  \end{align*}
The right bar identity now follows from Lemma \ref{dibar}.
\end{proof}

\begin{remark}
From any 0-dialgebra one obtains in a canonical way a corresponding algebra by forming 
the quotient modulo the ideal generated by all elements of the form $a \dashv b - a \vdash b$.
We will show that this functor commutes with the Cayley-Dickson process for dialgebras.
Let $A$ be a $0$-dialgebra. Let $I_A$ denote the smallest ideal of $A$ such that 
$A_{\mathrm{alg}} = A/I_A$ is an algebra in which the operations $\vdash$ and $\dashv$ coincide. 
The natural surjective homomorphism $A\to A_{\mathrm{alg}}$ defined by $a\mapsto a + I_A$ 
gives a functor from the category of 0-dialgebras to the category of algebras (over the same field). 
It is well known and easy to see that $I_A$ is generated by all elements of $A$ of the form $a\vdash b - a\dashv b$.
Let $A$ be a $0$-dialgebra and let $A\oplus A$ be its Cayley-Dickson double. 
We will show that $I_{A\oplus A} = I_A \oplus I_A$ and hence
$(A\oplus A)_{\mathrm{alg}} \cong A_{\mathrm{alg}}\oplus A_{\mathrm{alg}}$.
The generators of $I_{A\oplus A}$ consist all elements of the form
  \begin{align*}
  &
  (a,b)\vdash (c,d) - (a,b)\dashv (c,d) 
  \\
  &= 
  (a\vdash b + \gamma ( d\dashv b^* ), a^*\vdash d + c\dashv b) 
  - (a\dashv b + \gamma d\vdash b^*, a^*\dashv d + c\vdash b) 
  \\
  &= 
  (a\vdash b -  a\dashv b + \gamma ( d\dashv b^* - d\vdash b^*), 
  a^*\vdash d - a^*\dashv d + c\dashv b - c\vdash b)\,.
  \end{align*}
Since each component is evidently contained in $I_A$, we see that $I_{A\oplus A}\subseteq I_A\oplus I_A$.
On the other hand, for $a\vdash b - a\dashv b \in I_A$, we have
  \[
  (a\vdash b - a\dashv b,0) = (a,b)\vdash (0,0) - (a,b)\dashv (0,0)\in I_{A\oplus A},
  \] 
so that $I_A\oplus \{0\}\subseteq I_{A\oplus A}$. 
Similarly $\{0\}\oplus I_A\subseteq I_{A\oplus A}$. 
Thus $I_A\oplus I_A\subseteq I_{A\oplus A}$.
For the remaining assertion, it is straightforward to check that 
$\phi\colon A\oplus A\to A_{\mathrm{alg}}\oplus A_{\mathrm{alg}}$ given by $\phi(a,b) = (a + I_A, b + I_A)$ 
is a homomorphism where both $A\oplus A$ and $A_{\mathrm{alg}}\oplus A_{\mathrm{alg}}$ 
have their Cayley-Dickson structures. Clearly $\phi$ is surjective and by the first claim,
$\ker(\phi) = I_A\oplus I_A = I_{A\oplus A}$. 
Therefore $(A\oplus A)_{\mathrm{alg}} \cong A_{\mathrm{alg}}\oplus A_{\mathrm{alg}}$.
\end{remark}


\section{From commutative dialgebras to associative dialgebras}

\begin{lemma} \label{LRassociators}
If $A$ is a dialgebra with involution, then the right associator can be expressed in terms of the
left associator by the equation
  \[
  ( x, y, z )_\vdash \equiv - \big( ( z^\ast, y^\ast, x^\ast )_\dashv \big)^\ast.
  \]  
\end{lemma}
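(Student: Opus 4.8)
The plan is to prove the identity by direct computation, using only the definition of the left associator and the anti-automorphism property of the involution recorded in Definition \ref{dialgebrainvolution}. Since both sides are linear in $x,y,z$, it suffices to expand the right-hand side and simplify. First I would write out the inner associator from its definition,
\[
( z^\ast, y^\ast, x^\ast )_\dashv
=
( z^\ast \dashv y^\ast ) \dashv x^\ast
-
z^\ast \dashv ( y^\ast \dashv x^\ast ),
\]
and then apply the involution $\ast$ term by term, so that the task reduces to simplifying $\big( ( z^\ast \dashv y^\ast ) \dashv x^\ast \big)^\ast$ and $\big( z^\ast \dashv ( y^\ast \dashv x^\ast ) \big)^\ast$ separately.

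The key step is to apply the rule $( a \dashv b )^\ast \equiv b^\ast \vdash a^\ast$ from Definition \ref{dialgebrainvolution} twice to each nested product, together with $( a^\ast )^\ast \equiv a$. In each application a left product $\dashv$ is converted into a right product $\vdash$ and the two factors are interchanged; peeling off the outer product first and then the inner one, the three starred variables $z^\ast, y^\ast, x^\ast$ reappear unstarred but in the reverse order $x,y,z$. I expect the first term to collapse to $x \vdash ( y \vdash z )$ and the second to $( x \vdash y ) \vdash z$. Substituting back gives
\[
\big( ( z^\ast, y^\ast, x^\ast )_\dashv \big)^\ast
\equiv
x \vdash ( y \vdash z ) - ( x \vdash y ) \vdash z
\equiv
- ( x, y, z )_\vdash,
\]
which is exactly the claimed relation after multiplying through by $-1$.

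This argument requires only the involution axioms, not the bar identities, so it holds in any dialgebra with involution. The only real obstacle is the bookkeeping: one must track carefully the order reversal and the switch $\dashv \leftrightarrow \vdash$ at each of the two nested levels, and in particular verify that the two resulting terms line up precisely with the two terms of $( x, y, z )_\vdash$ rather than with some mismatched combination of products. Once the nesting is handled in the correct order (outer operation before inner), the cancellations are forced and the computation is routine.
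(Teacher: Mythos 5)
Your computation is correct and is exactly the ``straightforward calculation'' the paper leaves to the reader: applying $(a \dashv b)^\ast \equiv b^\ast \vdash a^\ast$ twice to each term of $(z^\ast, y^\ast, x^\ast)_\dashv$, together with $(a^\ast)^\ast \equiv a$ and the linearity of $\ast$, yields $x \vdash (y \vdash z) - (x \vdash y) \vdash z = -(x,y,z)_\vdash$, and as you note no bar identities are required. The only blemish is terminological: you once call $(z^\ast, y^\ast, x^\ast)_\dashv$ the ``inner associator,'' whereas it is the left associator (the inner one is $(\cdot,\cdot,\cdot)_\times$); the formula you actually write down and use is the correct one.
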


\begin{proof}
Straightforward calculation.
\end{proof}

\begin{theorem}
If $A$ is a commutative associative 0-dialgebra with involution, 
then $A \oplus A$ is an associative 0-dialgebra with involution.
\end{theorem}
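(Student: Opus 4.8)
The plan is to verify that $A \oplus A$ satisfies the three associativity identities of an associative $0$-dialgebra---left, inner, and right associativity---using the defining products \eqref{CDdialgebra1}--\eqref{CDdialgebra2}, the hypotheses on $A$ (commutativity, associativity, the bar identities, and the involution identities), and the structural results already established. By Proposition \ref{AAinvolution} we already know that $A \oplus A$ is a $0$-dialgebra with involution, so the $0$-identities and the involution identities come for free; only the three associativity identities remain to be checked.

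First I would exploit the involution to reduce the workload. Since $A \oplus A$ is a dialgebra with involution, Lemma \ref{LRassociators} expresses the right associator in terms of the left associator via $(X,Y,Z)_\vdash \equiv -\big((Z^\ast,Y^\ast,X^\ast)_\dashv\big)^\ast$, so right associativity in $A \oplus A$ follows automatically once left associativity is established. This halves the number of identities I must verify directly, leaving left associativity and inner associativity. For left associativity I would compute both $\big((a,b)\dashv(c,d)\big)\dashv(e,f)$ and $(a,b)\dashv\big((c,d)\dashv(e,f)\big)$ by substituting \eqref{CDdialgebra1} and expanding each of the two components. Each component will be a signed sum of monomials in $A$; I then rewrite these using the left, inner, and right associativity of $A$ to regroup parentheses, use commutativity (in the form $a\dashv b\equiv b\vdash a$) to align the factors, and apply the involution identities $(a\dashv b)^\ast\equiv b^\ast\vdash a^\ast$ and $(a\vdash b)^\ast\equiv b^\ast\dashv a^\ast$ together with the bar identities to match the two sides term by term.

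The inner associativity identity $(a,b)\vdash\big((c,d)\dashv(e,f)\big)\equiv\big((a,b)\vdash(c,d)\big)\dashv(e,f)$ is handled the same way, now mixing \eqref{CDdialgebra1} and \eqref{CDdialgebra2} in the expansion. The crucial point is that commutativity of $A$ collapses the distinction between certain $\dashv$ and $\vdash$ terms, which is exactly what is needed to reconcile the cross terms involving the involution (the terms $d\vdash b^\ast$ and $d\dashv b^\ast$ appearing in the second components): without commutativity these would not match, so the commutativity hypothesis is what makes the two conventions agree and the associator vanish.

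The main obstacle will be the bookkeeping in the cross terms of the second component, where the involution $b\mapsto b^\ast$ interacts with the regrouping. Here the involution identities convert a left product under $\ast$ into a right product and vice versa, so I must track carefully how each $\dashv$/$\vdash$ symbol transforms; combining this with commutativity and associativity of $A$ is where a sign or a left/right mismatch could most easily creep in. Beyond this careful term-matching, the argument is a routine but lengthy expansion, and once left and inner associativity are confirmed, right associativity is immediate from Lemma \ref{LRassociators}, completing the proof.
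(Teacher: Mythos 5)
Your proposal follows essentially the same route as the paper: verify left associativity by direct expansion using \eqref{CDdialgebra1} together with the commutativity, associativity, bar, and involution identities of $A$; deduce right associativity from Lemma \ref{LRassociators}; check inner associativity by a similar expansion mixing \eqref{CDdialgebra1} and \eqref{CDdialgebra2}; and invoke Proposition \ref{AAinvolution} for the $0$-identities and involution. The approach is correct and matches the paper's proof.
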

 
\begin{proof}
We first prove that $A \oplus A$ satisfies left associativity.
Applying equation \eqref{CDdialgebra1} twice, and using the involution and bilinearity, 
we obtain
  \allowdisplaybreaks
  \begin{align*}
  &
  \big( (a,b) \dashv (c,d) \big) \dashv (e,f)
  =
  \\
  &\quad
  \big( \,
  ( a \dashv c ) \dashv e - ( d \vdash b^\ast ) \dashv e - 
  f \vdash ( d^\ast \vdash a ) - f \vdash ( b^\ast \dashv c^\ast ), 
  \\
  &\quad\quad
  ( c^\ast \vdash a^\ast ) \dashv f - ( b \dashv d^\ast ) \dashv f + 
  e \vdash ( a^\ast \dashv d ) + e \vdash ( c \vdash b ) 
  \, \big).
  \end{align*}
We now apply the following equations which use the assumptions on $A$:
  \allowdisplaybreaks
  \begin{align*}
  ( a \dashv c ) \dashv e
  &\equiv
  a \dashv ( c \dashv e ),
  \\
  ( d \vdash b^\ast ) \dashv e
  &\equiv
  e \vdash ( d \vdash b^\ast )
  \equiv
  ( e \vdash d ) \vdash b^\ast,
  \\
  f \vdash ( d^\ast \vdash a ) 
  &\equiv
  ( f \vdash d^\ast ) \vdash a
  \equiv
  a \dashv ( f \vdash d^\ast ),
  \\
  f \vdash ( b^\ast \dashv c^\ast )
  &\equiv
  f \vdash ( c^\ast \vdash b^\ast )
  \equiv
  ( f \vdash c^\ast ) \vdash b^\ast
  \equiv
  ( c^\ast \dashv f ) \vdash b^\ast,
  \\
  ( c^\ast \vdash a^\ast ) \dashv f
  &\equiv
  ( a^\ast \dashv c^\ast ) \dashv f
  \equiv
  a^\ast \dashv ( c^\ast \dashv f ),
  \\
  ( b \dashv d^\ast ) \dashv f
  &\equiv
  b \dashv ( d^\ast \dashv f )
  \equiv
  ( f \vdash d^\ast ) \vdash b,
  \\
  e \vdash ( a^\ast \dashv d )
  &\equiv
  e \vdash ( d \vdash a^\ast )
  \equiv
  ( e \vdash d ) \vdash a^\ast 
  \equiv
  a^\ast \dashv ( e \vdash d ),
  \\
  e \vdash ( c \vdash b ) 
  &\equiv
  ( e \vdash c ) \vdash b
  \equiv
  ( c \dashv e ) \vdash b.
  \end{align*}
From these we obtain
  \allowdisplaybreaks
  \begin{align*}
  &
  \big( (a,b) \dashv (c,d) \big) \dashv (e,f)
  =
  \\
  &\quad
  \big( \,
  a \dashv ( c \dashv e ) - a \dashv ( f \vdash d^\ast ) - 
  ( c^\ast \dashv f ) \vdash b^\ast - ( e \vdash d ) \vdash b^\ast, 
  \\
  &\quad\quad
  a^\ast \dashv ( c^\ast \dashv f ) + a^\ast \dashv ( e \vdash d ) + 
  ( c \dashv e ) \vdash b - ( f \vdash d^\ast ) \vdash b 
  \, \big) .
  \end{align*}
Applying equation \eqref{CDdialgebra1} and bilinearity, this equals
$(a,b) \dashv \big( (c,d) \dashv (e,f) \big)$.

Right associativity now follows from Lemma \ref{LRassociators}.

For inner associativity, we proceed as above and obtain
  \allowdisplaybreaks
  \begin{align*}
  &
  \big( (a,b) \vdash (c,d) \big) \dashv (e,f) 
  =
  \\
  &\quad
  \big( \,
  ( a \vdash c ) \dashv e - ( d \dashv b^\ast ) \dashv e - 
  f \vdash ( d^\ast \dashv a ) - f \vdash ( b^\ast \vdash c^\ast ),
  \\
  &\quad\quad
  ( c^\ast \dashv a^\ast ) \dashv f - ( b \vdash d^\ast ) \dashv f + 
  e \vdash ( a^\ast \vdash d ) + e \vdash ( c \dashv b )
  \, \big). 
  \end{align*}
We now apply the following equations which use the assumptions on $A$:
  \allowdisplaybreaks
  \begin{align*}
  ( a \vdash c ) \dashv e
  &\equiv
  a \vdash ( c \dashv e ),
  \\
  ( d \dashv b^\ast ) \dashv e
  &\equiv
  ( b^\ast \vdash d ) \dashv e
  \equiv
  b^\ast \vdash ( d \dashv e )
  \equiv
  ( e \vdash d ) \dashv b^\ast,
  \\
  f \vdash ( d^\ast \dashv a )
  &\equiv
  ( f \vdash d^\ast ) \dashv a
  \equiv
  a \vdash ( f \vdash d^\ast ),
  \\
  f \vdash ( b^\ast \vdash c^\ast )
  &\equiv
  f \vdash ( c^\ast \dashv b^\ast )
  \equiv
  ( f \vdash c^\ast ) \dashv b^\ast
  \equiv
  ( c^\ast \dashv f ) \dashv b^\ast,
  \\
  ( c^\ast \dashv a^\ast ) \dashv f
  &\equiv
  ( a^\ast \vdash c^\ast ) \dashv f
  \equiv
  a^\ast \vdash ( c^\ast \dashv f ),
  \\
  ( b \vdash d^\ast ) \dashv f
  &\equiv
  b \vdash ( d^\ast \dashv f )
  \equiv
  ( f \vdash d^\ast ) \dashv b,
  \\
  e \vdash ( a^\ast \vdash d )
  &\equiv
  e \vdash ( d \dashv a^\ast )
  \equiv
  ( e \vdash d ) \dashv a^\ast
  \equiv
  a^\ast \vdash ( e \vdash d ),
  \\
  e \vdash ( c \dashv b )
  &\equiv
  ( e \vdash c ) \dashv b 
  \equiv
  ( c \dashv e ) \dashv b.
  \end{align*}  
From these we obtain
  \allowdisplaybreaks
  \begin{align*}
  &
  \big( (a,b) \vdash (c,d) \big) \dashv (e,f)
  =
  \\
  &\quad
  \big( \,
  a \vdash ( c \dashv e ) - a \vdash ( f \vdash d^\ast ) - 
  ( c^\ast \dashv f ) \dashv b^\ast - (e \vdash d ) \dashv b^\ast, 
  \\
  &\quad\quad
  a^\ast \vdash ( c^\ast \dashv f ) + a^\ast \vdash ( e \vdash d ) + 
  ( c \dashv e ) \dashv b - ( f \vdash d^\ast ) \dashv b 
  \, \big).
  \end{align*}
Applying equation \eqref{CDdialgebra1} and bilinearity, this equals
$(a,b) \vdash \big( (c,d) \dashv (e,f) \big)$.

Now Proposition \ref{AAinvolution} completes the proof.
\end{proof}

\begin{remark}
If $A$ is a commutative associative 0-dialgebra with involution then
  \[
  (a,b) \dashv (c,d) - (c,d) \vdash (a,b)
  =
  \big( \,
  b \dashv d^\ast - ( b \dashv d^\ast)^\ast, \,
  ( a^\ast {-} a ) \dashv d + ( c {-} c^\ast ) \vdash b
  \, \big).
  \]
Thus $A \oplus A$ is not necessarily commutative.
\end{remark}  


\section{From associative dialgebras to alternative dialgebras}

To prove the next theorem, we need to impose further conditions on a 0-dialgebra with involution,
obtained by applying the KP algorithm to the assumption that symmetric elements
in an algebra with involution commute with every element.  
Assume that $A$ is an algebra with involution and $(x+x^\ast)y \equiv y(x+x^\ast)$ for all $x, y \in A$.  
If we make $x$ (respectively $y$) the central variable then we obtain
  \[
  ( x + x^\ast ) \dashv y \equiv y \vdash ( x + x^\ast ),
  \qquad
  ( x + x^\ast ) \vdash y \equiv y \dashv ( x + x^\ast ).
  \]  

\begin{definition}
Let $A$ be a 0-dialgebra with involution.  We introduce the notation 
$\mathrm{sym}(x) = x + x^\ast$, and we write $\{ x, y \} = x \dashv y - y \vdash x$ for the Leibniz bracket.
We say that $A$ is a \textbf{partially symmetric} 0-dialgebra if it satisfies the identities
  \[
  \{ \mathrm{sym}(x), y \} \equiv 0,
  \qquad
  \{ x, \mathrm{sym}(y) \} \equiv 0.
  \]
\end{definition}

\begin{lemma} \label{movestar}
In every partially symmetric 0-dialgebra we have
  \[
  \mathrm{sym}( x \dashv y^\ast ) \equiv \mathrm{sym}( x^\ast \dashv y ), 
  \quad \quad 
  \mathrm{sym}( x \dashv y ) \equiv \mathrm{sym}( x^\ast \dashv y^\ast ).
  \]
\end{lemma}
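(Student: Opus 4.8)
The plan is to reduce both symmetrized identities to equivalent statements about the Leibniz bracket $\{-,-\}$. First I would expand each symmetrization using $\mathrm{sym}(a) = a + a^\ast$ together with the involution identity $(a \dashv b)^\ast \equiv b^\ast \vdash a^\ast$ of Definition \ref{dialgebrainvolution} and $(a^\ast)^\ast \equiv a$. For instance, $\mathrm{sym}(x \dashv y^\ast) \equiv x \dashv y^\ast + (x \dashv y^\ast)^\ast \equiv x \dashv y^\ast + y \vdash x^\ast$, and similarly $\mathrm{sym}(x^\ast \dashv y) \equiv x^\ast \dashv y + y^\ast \vdash x$. Regrouping the difference of these two expressions as $(x \dashv y^\ast - y^\ast \vdash x) - (x^\ast \dashv y - y \vdash x^\ast)$, the first claimed identity is seen to be equivalent to $\{x, y^\ast\} \equiv \{x^\ast, y\}$. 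The same expansion applied to the second claim, using $(x \dashv y)^\ast \equiv y^\ast \vdash x^\ast$ and $(x^\ast \dashv y^\ast)^\ast \equiv y \vdash x$, shows it is equivalent to $\{x, y\} \equiv \{x^\ast, y^\ast\}$.

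Next I would extract two sign-change relations directly from the partial-symmetry hypotheses. Since the Leibniz bracket is bilinear and $\mathrm{sym}$ is additive, the identity $\{\mathrm{sym}(x), y\} \equiv 0$ expands to $\{x, y\} + \{x^\ast, y\} \equiv 0$, so that $\{x^\ast, y\} \equiv -\{x, y\}$; likewise $\{x, \mathrm{sym}(y)\} \equiv 0$ yields $\{x, y^\ast\} \equiv -\{x, y\}$.

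Finally I would combine these relations. The first identity follows at once, since $\{x, y^\ast\} \equiv -\{x, y\} \equiv \{x^\ast, y\}$. For the second, I would substitute $x^\ast$ for $x$ in the relation $\{x, y^\ast\} \equiv -\{x, y\}$ (using $(x^\ast)^\ast \equiv x$) to obtain $\{x^\ast, y^\ast\} \equiv -\{x^\ast, y\}$, and then apply $\{x^\ast, y\} \equiv -\{x, y\}$ to conclude $\{x^\ast, y^\ast\} \equiv \{x, y\}$.

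The computation is short and presents no serious obstacle; the only point requiring care is the correct tracking of the involution when rewriting the four symmetrizations as bracket expressions. The conceptual content is simply that both target identities are repackagings of the two sign-change relations coming from partial symmetry, and it is worth noting that the bar identities of the underlying 0-dialgebra play no role in the argument.
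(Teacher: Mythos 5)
Your proof is correct. It works from the same hypotheses as the paper but is organized differently: the paper proves the first identity by a direct chain of substitutions, writing $y^\ast = \mathrm{sym}(y) - y$ and $x^\ast = \mathrm{sym}(x) - x$ in the expansion $x \dashv y^\ast + y \vdash x^\ast$ and then applying the partial-symmetry identities in the form $x \dashv \mathrm{sym}(y) \equiv \mathrm{sym}(y) \vdash x$ and $\mathrm{sym}(x) \dashv y \equiv y \vdash \mathrm{sym}(x)$, after which it dismisses the second identity as a consequence of the first (via the substitution $y \mapsto y^\ast$). You instead factor everything through the Leibniz bracket: you check that the two claims are equivalent to $\{ x, y^\ast \} \equiv \{ x^\ast, y \}$ and $\{ x, y \} \equiv \{ x^\ast, y^\ast \}$, and deduce both from the sign-change relations $\{ x^\ast, y \} \equiv -\{ x, y \}$ and $\{ x, y^\ast \} \equiv -\{ x, y \}$ obtained by expanding $\{ \mathrm{sym}(x), y \} \equiv 0$ and $\{ x, \mathrm{sym}(y) \} \equiv 0$ by bilinearity. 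Those sign-change relations are precisely what the paper derives at the start of the proof of the lemma immediately following this one (the one asserting $\mathrm{sym}( \{ x, y \} ) \equiv 0$), so your route effectively unifies the two proofs; what it buys is a cleaner conceptual statement (both identities are repackagings of the sign-change rules), at the cost of the small bookkeeping step of verifying that each difference of symmetrizations really is a difference of brackets, which you carry out correctly. Your closing observation that the bar identities play no role is also accurate of the paper's argument.
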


\begin{proof}
Using the definition of partially symmetric 0-dialgebra we have
  \begin{align*}
  \mathrm{sym}( x \dashv y^\ast )
  &\equiv
  x \dashv y^\ast + y \vdash x^\ast
  =
  x \dashv ( \mathrm{sym}(y) - y ) + y \vdash ( \mathrm{sym}(x) - x )
  \\
  &=
  x \dashv \mathrm{sym}(y) - x \dashv y + y \vdash \mathrm{sym}(x) - y \vdash x
  \\
  &=
  \mathrm{sym}(y) \vdash x - x \dashv y + \mathrm{sym}(x) \dashv y - y \vdash x
  \\
  &=
  \mathrm{sym}(y) \vdash x - y \vdash x + \mathrm{sym}(x) \dashv y - x \dashv y
  \\
  &=
  ( \mathrm{sym}(y) - y ) \vdash x + ( \mathrm{sym}(x) - x ) \dashv y
  \\
  &=
  y^\ast \vdash x + x^\ast \dashv y
  =
  x^\ast \dashv y + y^\ast \vdash x
  =
  \mathrm{sym}( x^\ast \dashv y ).
  \end{align*}
The second identity is a consequence of the first.
\end{proof}

\begin{lemma}
In every partially symmetric 0-dialgebra we have $\mathrm{sym}( \{ x, y \} ) \equiv 0$.
\end{lemma}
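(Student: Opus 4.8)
The plan is to compute $\mathrm{sym}(\{x,y\})$ explicitly and then recognize the result as a direct consequence of Lemma \ref{movestar}. First I would write $\mathrm{sym}(\{x,y\}) = \{x,y\} + \{x,y\}^\ast$ and expand the Leibniz bracket as $\{x,y\} = x \dashv y - y \vdash x$. To handle the starred term I apply the involution identities $(a \dashv b)^\ast \equiv b^\ast \vdash a^\ast$ and $(a \vdash b)^\ast \equiv b^\ast \dashv a^\ast$ from Definition \ref{dialgebrainvolution}, which give $\{x,y\}^\ast = (x \dashv y)^\ast - (y \vdash x)^\ast = y^\ast \vdash x^\ast - x^\ast \dashv y^\ast$. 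Combining these, $\mathrm{sym}(\{x,y\}) = x \dashv y - y \vdash x + y^\ast \vdash x^\ast - x^\ast \dashv y^\ast$.

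The key observation is that this last expression is precisely the rearrangement of the second identity in Lemma \ref{movestar}. Indeed, expanding both sides of that identity using the involution gives $\mathrm{sym}(x \dashv y) = x \dashv y + y^\ast \vdash x^\ast$ and $\mathrm{sym}(x^\ast \dashv y^\ast) = x^\ast \dashv y^\ast + y \vdash x$, so the equation $\mathrm{sym}(x \dashv y) \equiv \mathrm{sym}(x^\ast \dashv y^\ast)$ is exactly the statement $x \dashv y - y \vdash x + y^\ast \vdash x^\ast - x^\ast \dashv y^\ast \equiv 0$. Hence $\mathrm{sym}(\{x,y\}) \equiv 0$ and the lemma follows immediately.

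I do not anticipate any genuine obstacle in this argument; the only point requiring care is keeping track of the order reversal in the involution identities so that the four terms of $\mathrm{sym}(\{x,y\})$ match up correctly with the two expanded symmetrized products coming from Lemma \ref{movestar}. Routing the proof through that lemma is preferable to arguing from scratch, since it reuses the work already done with the two partially symmetric identities $\{\mathrm{sym}(x),y\} \equiv 0$ and $\{x,\mathrm{sym}(y)\} \equiv 0$; a direct proof would simply re-derive the second \emph{move-star} identity inline before reaching the same conclusion.
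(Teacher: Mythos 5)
Your proof is correct, but it takes a different route from the paper's. You expand $\mathrm{sym}(\{x,y\}) = x \dashv y - y \vdash x + y^\ast \vdash x^\ast - x^\ast \dashv y^\ast$ and observe that this is term-for-term the rearranged form of the second identity of Lemma \ref{movestar}, $\mathrm{sym}(x \dashv y) \equiv \mathrm{sym}(x^\ast \dashv y^\ast)$; the checks go through (in particular $\mathrm{sym}(x^\ast \dashv y^\ast) = x^\ast \dashv y^\ast + y \vdash x$ since $(x^\ast \dashv y^\ast)^\ast = y \vdash x$), so the lemma follows in one line from the previous one. The paper instead works from the partial-symmetry axioms to first establish the auxiliary rule $\{x^\ast, y\} \equiv -\{x,y\}$ and $\{x, y^\ast\} \equiv -\{x,y\}$ (``stars inside a Leibniz bracket become minus signs''), and then computes $\{x,y\}^\ast = -\{x^\ast,y^\ast\} \equiv \{x,y^\ast\} \equiv -\{x,y\}$. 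Your argument is shorter and reuses Lemma \ref{movestar} as intended by its placement; the paper's argument is self-contained modulo the definition and produces the sign rule for starred arguments of the Leibniz bracket as a by-product, which is a conceptually useful fact even though it is not invoked again explicitly. Both rest on the same two partial-symmetry identities, so there is no difference in the hypotheses used.
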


\begin{proof}
Using the definition of partially symmetric 0-dialgebra we have
  \begin{align*}
  \{ x^\ast, y \}
  &=
  \{ \mathrm{sym}(x) - x, y \}
  =
  \{ \mathrm{sym}(x), y \} - \{ x, y \}
  \equiv
  - \{ x, y \},
  \\
  \{ x, y^\ast \}
  &=
  \{ x, \mathrm{sym}(y) - y \}
  =
  \{ x, \mathrm{sym}(y) \} - \{ x, y \}
  \equiv
  - \{ x, y \}.
  \end{align*}
Thus we can replace stars inside a Leibniz bracket by minus signs.  Therefore
  \[
  \{ x, y \}^\ast
  =
  ( x \dashv y - y \vdash x )^\ast
  =
  y^\ast \vdash x^\ast - x^\ast \dashv y^\ast
  =
  - \{ x^\ast, y^\ast \}
  \equiv
  \{ x, y^\ast \}
  \equiv
  - \{ x, y \},
  \]
and hence
$\mathrm{sym}( \{ x, y \} ) = \{x,y\} + \{x,y\}^\ast = 0$.
\end{proof}

\begin{lemma}
If $A$ is a partially symmetric 0-dialgebra then so is its Cayley-Dickson double $A \oplus A$.
\end{lemma}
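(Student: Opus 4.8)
The plan is to verify directly the two defining identities of a partially symmetric 0-dialgebra for the double $A \oplus A$, namely $\{ \mathrm{sym}(X), Y \} \equiv 0$ and $\{ X, \mathrm{sym}(Y) \} \equiv 0$ for all $X, Y \in A \oplus A$. The crucial preliminary observation is that symmetrization in the double collapses the second component: for $X = (a,b)$, equation \eqref{CDdialgebra3} gives $\mathrm{sym}(X) = (a,b) + (a^\ast, -b) = (\mathrm{sym}(a), 0)$, so $\mathrm{sym}(X)$ lies in the first copy of $A$ and its first component $s = \mathrm{sym}(a)$ is a symmetric element of $A$, i.e.\ $s^\ast = s$. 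This single fact is what makes the whole computation short.

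For the first identity, write $\mathrm{sym}(X) = (s,0)$ and $Y = (c,d)$. Using \eqref{CDdialgebra1} and \eqref{CDdialgebra2}, together with $s^\ast = s$ and the vanishing of every summand that multiplies the zero second component, I would compute $(s,0) \dashv (c,d) = (s \dashv c, s \dashv d)$ and $(c,d) \vdash (s,0) = (c \vdash s, s \dashv d)$. Subtracting, the second components cancel and the Leibniz bracket reduces to $\{ (s,0), (c,d) \} = (s \dashv c - c \vdash s, 0) = (\{ s, c \}, 0)$. Since $s = \mathrm{sym}(a)$ and $A$ is partially symmetric, $\{ \mathrm{sym}(a), c \} \equiv 0$, so the bracket vanishes.

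For the second identity, write $X = (a,b)$ and $\mathrm{sym}(Y) = (t,0)$ with $t = \mathrm{sym}(c)$ and $t^\ast = t$. The same device gives $(a,b) \dashv (t,0) = (a \dashv t, t \vdash b)$ and $(t,0) \vdash (a,b) = (t \vdash a, t \vdash b)$; again the second components cancel and $\{ (a,b), (t,0) \} = (a \dashv t - t \vdash a, 0) = (\{ a, t \}, 0)$, which is zero because $\{ a, \mathrm{sym}(c) \} \equiv 0$ in $A$. This completes the verification.

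There is no real obstacle here: once one notices that $\mathrm{sym}$ kills the second coordinate and produces a $\ast$-fixed first coordinate, both identities for $A \oplus A$ reduce to the corresponding identities already assumed in $A$. The only thing to watch is the bookkeeping in applying \eqref{CDdialgebra1}--\eqref{CDdialgebra2} — in particular remembering to use $s^\ast = s$ (respectively $t^\ast = t$) and to discard every term containing the zero component — but none of the lengthy term-by-term manipulation of the associativity theorem is needed.
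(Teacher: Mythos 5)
Your proof is correct and fills in exactly the ``straightforward calculation'' that the paper omits: the key observation that $\mathrm{sym}\big((a,b)\big) = (\mathrm{sym}(a),0)$ with $\ast$-fixed first coordinate collapses all four products to the form you state, so both identities in $A \oplus A$ reduce to the corresponding identities in $A$, and your evaluations against \eqref{CDdialgebra1}--\eqref{CDdialgebra2} check out. The only point left implicit is that $A \oplus A$ is a 0-dialgebra with involution (the remaining clause of the definition of partially symmetric), which is supplied by Proposition \ref{AAinvolution}.
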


\begin{proof}
Straightforward calculation.
\end{proof}

The next sequence of lemmas can be verified by elementary calculations.

\begin{lemma} \label{oldLemma5.8}
Let $A$ be a 0-dialgebra with involution.  
In the Cayley-Dickson double $A \oplus A$, the left, inner and right associators are as follows:
  \begin{align*}
  &
  \big( \, (a,b), \, (c,d), \, (e,f) \, \big)_\dashv
  =
  \\
  &\quad
  \big(
  ( a \dashv c ) \dashv e
  - ( d \vdash b^\ast ) \dashv e 
  - f \vdash ( d^\ast \vdash a ) 
  - f \vdash ( b^\ast \dashv c^\ast )
  \\
  &\qquad
  - a \dashv ( c \dashv e )
  + a \dashv ( f \vdash d^\ast )
  + ( c^\ast \dashv f ) \vdash b^\ast
  + ( e \vdash d ) \vdash b^\ast,
  \\
  &\qquad
  ( c^\ast \vdash a^\ast ) \dashv f
  - ( b \dashv d^\ast ) \dashv f
  + e \vdash ( a^\ast \dashv d )
  + e \vdash ( c \vdash b )
  \\
  &\qquad
  - a^\ast \dashv ( c^\ast \dashv f )
  - a^\ast \dashv ( e \vdash d )
  - ( c \dashv e ) \vdash b
  + ( f \vdash d^\ast ) \vdash b
  \big),
  \\
  &
  \big( \, (a,b), \, (c,d), \, (e,f) \, \big)_\times
  =
  \\
  &\quad
  \big(
  ( a \vdash c ) \dashv e
  - ( d \dashv b^\ast ) \dashv e
  - f \vdash ( d^\ast \dashv a )
  - f \vdash ( b^\ast \vdash c^\ast )
  \\
  &\qquad
  - a \vdash ( c \dashv e )
  + a \vdash ( f \vdash d^\ast )
  + ( c^\ast \dashv f ) \dashv b^\ast
  + ( e \vdash d ) \dashv b^\ast,
  \\
  &\qquad
  ( c^\ast \dashv a^\ast ) \dashv f
  - ( b \vdash d^\ast ) \dashv f
  + e \vdash ( a^\ast \vdash d )
  + e \vdash ( c \dashv b )
  \\
  &\qquad
  - a^\ast \vdash ( c^\ast \dashv f )
  - a^\ast \vdash ( e \vdash d )
  - ( c \dashv e ) \dashv b
  + ( f \vdash d^\ast ) \dashv b
  \big),
  \\
  &
  \big( \, (a,b), \, (c,d), \, (e,f) \, \big)_\vdash
  =
  \\
  &\quad
  \big(
  ( a \vdash c ) \vdash e
  - ( d \dashv b^\ast ) \vdash e 
  - f \dashv ( d^\ast \dashv a ) 
  - f \dashv ( b^\ast \vdash c^\ast )
  \\
  &\qquad
  - a \vdash ( c \vdash e )
  + a \vdash ( f \dashv d^\ast )
  + ( c^\ast \vdash f ) \dashv b^\ast
  + ( e \dashv d ) \dashv b^\ast,
  \\
  &\qquad
  ( c^\ast \dashv a^\ast ) \vdash f
  - ( b \vdash d^\ast ) \vdash f
  + e \dashv ( a^\ast \vdash d )
  + e \dashv ( c \dashv b )
  \\
  &\qquad
  - a^\ast \vdash ( c^\ast \vdash f )
  - a^\ast \vdash ( e \dashv d )
  - ( c \vdash e ) \dashv b
  + ( f \dashv d^\ast ) \dashv b
  \big).
  \end{align*}
\end{lemma}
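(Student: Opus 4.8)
The plan is to compute each of the three associators directly from the product definitions \eqref{CDdialgebra1}--\eqref{CDdialgebra2} on $A \oplus A$, substituting twice and collecting terms. No properties of $A$ beyond its being a dialgebra with involution are required, so the formulas hold in the stated generality.

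I would begin with the left associator
  \begin{align*}
  \big( (a,b), (c,d), (e,f) \big)_\dashv
  &=
  \big( (a,b) \dashv (c,d) \big) \dashv (e,f)
  \\
  &\quad
  - (a,b) \dashv \big( (c,d) \dashv (e,f) \big).
  \end{align*}
The first product $\big( (a,b) \dashv (c,d) \big) \dashv (e,f)$ was already expanded in the proof of the theorem of the preceding section, at the stage before the dialgebra axioms of $A$ were invoked; that computation supplies the first four terms in each component of the stated formula. For the second product I would write $(c,d) \dashv (e,f) = ( c \dashv e - f \vdash d^\ast,\, c^\ast \dashv f + e \vdash d )$ and apply \eqref{CDdialgebra1} once more with $(a,b)$ acting on the left; after negation this yields the remaining four terms of each component.

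The only nonroutine point is the bookkeeping of the involution. Each outer application of \eqref{CDdialgebra1} produces a factor $(\,\cdot\,)^\ast$ applied to a sum of the shape $x \dashv y + z \vdash w$, which I would simplify using the involution identities of Definition~\ref{dialgebrainvolution}, namely $( x \dashv y )^\ast \equiv y^\ast \vdash x^\ast$ and $( x \vdash y )^\ast \equiv y^\ast \dashv x^\ast$, together with $( x^\ast )^\ast \equiv x$. It is precisely this interchange of $\dashv$ and $\vdash$ under $\ast$, and the reappearance of unstarred variables it causes, that produces the mixed pattern of products seen in the formulas; keeping track of these interchanges and of the signs arising from the subtracted summand is the one thing that demands care.

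The inner and right associators are handled identically. For the inner associator I would expand $\big( (a,b) \vdash (c,d) \big) \dashv (e,f) - (a,b) \vdash \big( (c,d) \dashv (e,f) \big)$, applying \eqref{CDdialgebra1} or \eqref{CDdialgebra2} at each step according to whether the relevant product is $\dashv$ or $\vdash$; for the right associator I would use \eqref{CDdialgebra2} throughout. In each case this is the same substitute-twice-and-collect procedure, so once the left associator is established the remaining two require only the analogous, purely mechanical, repetition.
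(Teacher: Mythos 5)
Your proposal is correct and matches what the paper intends: the paper offers no written proof beyond ``elementary calculations,'' and the expansion of $\big((a,b)\dashv(c,d)\big)\dashv(e,f)$ you cite from the preceding theorem's proof (before the associativity and commutativity hypotheses are invoked) does supply exactly the first four terms of each component, with the remaining terms coming from the negated second substitution as you describe. Your observation that only the involution identities of Definition~\ref{dialgebrainvolution} are used, and not the bar identities, is also accurate.
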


\begin{lemma} \label{altlemma1}
Let $A$ be a 0-dialgebra with involution.  
In the Cayley-Dickson double $A \oplus A$, 
the result of evaluating the first identity in Definition \ref{alternativedialgebra} is
  \begin{align*}
  &
  \big( \, (a,b), \, (c,d), \, (e,f) \, \big)_\dashv + \big( \, (e,f), \, (c,d), 
  \, (a,b) \, \big)_\vdash
  =
  \\
  &\quad
  \big(
  ( a \dashv c ) \dashv e
  - a \dashv ( c \dashv e )
  + ( e \vdash c ) \vdash a  
  - e \vdash ( c \vdash a ) 
  \\
  &\qquad
  + ( a \dashv d ) \dashv f^\ast
  - ( d \dashv f^\ast ) \vdash a     
  + a \dashv ( f \vdash d^\ast ) 
  - f \vdash ( d^\ast \vdash a ) 
  \\
  &\qquad \quad
  + ( c^\ast \vdash b ) \dashv f^\ast 
  - b \dashv ( f^\ast \vdash c^\ast ) 
  + ( c^\ast \dashv f ) \vdash b^\ast 
  - f \vdash ( b^\ast \dashv c^\ast ) 
  \\
  &\qquad \quad \quad
  - b \dashv ( d^\ast \dashv e ) 
  + e \vdash ( b \dashv d^\ast ) 
  + ( e \vdash d ) \vdash b^\ast 
  - ( d \vdash b^\ast ) \dashv e,
  \\
  &\qquad
  - a^\ast \dashv ( c^\ast \dashv f )  
  + ( c^\ast \vdash a^\ast ) \dashv f 
  + a \dashv ( c \dashv f )
  - ( c \vdash a ) \dashv f 
  \\
  &\qquad \quad
  - a^\ast \dashv ( e \vdash d ) 
  + a \dashv ( e^\ast \vdash d ) 
  + e \vdash ( a^\ast \dashv d ) 
  - e^\ast \vdash ( a \dashv d ) 
  \\
  &\qquad \quad \quad  
  + e \vdash ( c \vdash b ) 
  - ( c \dashv e ) \vdash b 
  + ( c^\ast \dashv e^\ast ) \vdash b 
  - e^\ast \vdash ( c^\ast \vdash b ) 
  \\
  &\qquad \quad \quad \quad  
  - ( b \dashv d^\ast ) \dashv f
  + ( f \vdash d^\ast ) \vdash b 
  - ( f \vdash d^\ast ) \vdash b 
  + ( b \dashv d^\ast ) \dashv f
  \big).
  \end{align*} 
\end{lemma}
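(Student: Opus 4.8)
The plan is to treat this as a purely formal computation built directly on Lemma~\ref{oldLemma5.8}, with no appeal to the bar identities or to any structural property of $A$ beyond the existence of the involution. Reading the first identity of Definition~\ref{alternativedialgebra}, namely $(a,b,c)_\dashv + (c,b,a)_\vdash \equiv 0$, and substituting the three elements $(a,b)$, $(c,d)$, $(e,f)$ of $A \oplus A$ into slots $1$, $2$, $3$ respectively, the quantity to be evaluated is $\big((a,b),(c,d),(e,f)\big)_\dashv + \big((e,f),(c,d),(a,b)\big)_\vdash$. The one point worth flagging is that the reversal $(c,b,a)$ in the second summand forces the outer arguments of the right associator to appear in the swapped order $(e,f),(c,d),(a,b)$, which explains the asymmetric placement of the variables in the statement.

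First I would copy the left-associator formula for $\big((a,b),(c,d),(e,f)\big)_\dashv$ verbatim from Lemma~\ref{oldLemma5.8}. Second, I would obtain $\big((e,f),(c,d),(a,b)\big)_\vdash$ from the right-associator formula of the same lemma by performing the argument swap $(a,b)\leftrightarrow(e,f)$; at the level of $A$-components this is the substitution $a \mapsto e$, $b \mapsto f$, $e \mapsto a$, $f \mapsto b$ (with $c$ and $d$ fixed), applied inside every term and to every starred variable. Third, I would add the two resulting triples componentwise.

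The verification then reduces to matching the sum against the claimed right-hand side, term by term. In the first component, the sixteen summands of the claimed expression are exactly the eight terms of the left associator together with the eight terms of the substituted right associator, with no cancellation; the match becomes immediate once the terms are sorted into the four grouped lines of the statement. In the second component the same bookkeeping produces the stated sixteen summands, and here I would remark that the final four terms form two cancelling pairs, $+(f \vdash d^\ast)\vdash b - (f \vdash d^\ast)\vdash b$ and $-(b \dashv d^\ast)\dashv f + (b \dashv d^\ast)\dashv f$, which the statement deliberately leaves unsimplified so that the origin of each summand remains transparent.

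The only genuine obstacle is bookkeeping: keeping the six variables and their stars straight through the argument swap in the right associator, and ensuring that every sign survives the substitution intact. Because the identity is established purely by substitution and addition, with no $0$-dialgebra identity invoked at this stage, there is no conceptual step that can fail, only the risk of a transcription error; a side-by-side comparison of the two source formulas from Lemma~\ref{oldLemma5.8} eliminates that risk.
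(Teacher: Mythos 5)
Your proposal is correct and matches the paper's intent exactly: the paper offers no written proof for this lemma beyond the remark that it (together with Lemma~\ref{oldLemma5.8}) ``can be verified by elementary calculations,'' and your substitution $a \leftrightarrow e$, $b \leftrightarrow f$ into the right-associator formula of Lemma~\ref{oldLemma5.8} followed by componentwise addition is precisely that calculation. Your term count checks out (all sixteen summands in each component arise with no cancellation, and the two deliberately unsimplified cancelling pairs in the second component are exactly as you describe), so nothing further is needed.
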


\begin{lemma} \label{altlemma2}
Let $A$ be a 0-dialgebra with involution.  
In the Cayley-Dickson double $A \oplus A$, 
the result of evaluating the second identity in Definition \ref{alternativedialgebra} is
  \begin{align*}
  &
  \big( \, (a,b), \, (c,d), \, (e,f) \, \big)_\dashv - \big( \, (c,d), \, (e,f), 
  \, (a,b) \, \big)_\vdash
  \\
  &=
  \big(
  ( a \dashv c ) \dashv e
  - a \dashv ( c \dashv e ) 
  - ( c \vdash e ) \vdash a
  + c \vdash ( e \vdash a ) 
  \\
  &\qquad
  + a \dashv ( f \vdash d^\ast ) 
  - f \vdash ( d^\ast \vdash a )
  - ( a \dashv f ) \dashv d^\ast 
  + ( f \dashv d^\ast ) \vdash a 
  \\
  &\qquad \quad
  + b \dashv ( f^\ast \dashv c ) 
  + ( c^\ast \dashv f ) \vdash b^\ast 
  - f \vdash ( b^\ast \dashv c^\ast ) 
  - c \vdash ( b \dashv f^\ast ) 
  \\
  &\qquad \quad \quad
  + b \dashv ( d^\ast \vdash e^\ast ) 
  - ( e^\ast \vdash b ) \dashv d^\ast 
  - ( d \vdash b^\ast ) \dashv e
  + ( e \vdash d ) \vdash b^\ast,
  \\
  &\qquad
  - a \dashv ( c^\ast \vdash f ) 
  + c^\ast \vdash ( a \dashv f ) 
  - a^\ast \dashv ( c^\ast \dashv f ) 
  + ( c^\ast \vdash a^\ast ) \dashv f 
  \\
  &\qquad \quad
  - a \dashv ( e \dashv d ) 
  + ( e \vdash a ) \dashv d 
  + e \vdash ( a^\ast \dashv d ) 
  - a^\ast \dashv ( e \vdash d ) 
  \\
  &\qquad \quad \quad
  + e \vdash ( c \vdash b )
  - ( c \dashv e ) \vdash b 
  - ( e^\ast \dashv c^\ast ) \vdash b 
  + c^\ast \vdash ( e^\ast \vdash b ) 
  \\
  &\qquad \quad \quad \quad
  - ( b \dashv d^\ast ) \dashv f
  + ( f \vdash d^\ast ) \vdash b
  + ( d \vdash f^\ast ) \vdash b 
  - ( b \dashv f^\ast ) \dashv d 
  \big).
  \end{align*} 
\end{lemma}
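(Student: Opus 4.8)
The plan is to reduce the entire computation to two invocations of Lemma~\ref{oldLemma5.8}, which already expresses all three associators of the Cayley-Dickson double in terms of the operations on $A$. The first observation is that the left-hand side of the asserted identity is exactly the second expression of Definition~\ref{alternativedialgebra}, namely $(x,y,z)_\dashv - (y,z,x)_\vdash$, evaluated at $x=(a,b)$, $y=(c,d)$, $z=(e,f)$. Hence the left associator enters with its arguments in the order $(a,b),(c,d),(e,f)$, whereas the right associator enters with the arguments cyclically shifted to $(c,d),(e,f),(a,b)$.

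For the first summand I would copy verbatim the formula for $\big((a,b),(c,d),(e,f)\big)_\dashv$ from Lemma~\ref{oldLemma5.8}. For the second summand I would take the formula for $\big((a,b),(c,d),(e,f)\big)_\vdash$ from the same lemma and apply the renaming $a\mapsto c$, $b\mapsto d$, $c\mapsto e$, $d\mapsto f$, $e\mapsto a$, $f\mapsto b$, taking care to transport every involution decoration through the substitution. Subtracting the two expressions componentwise then produces the two components of an element of $A\oplus A$, each a sum of sixteen monomials.

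The remaining work is pure bookkeeping: one collects the eight terms from each component of the left associator together with the eight sign-flipped terms from the corresponding component of the permuted right associator, and verifies that these sixteen monomials agree, term for term, with those displayed in the statement. It is worth emphasizing that no nontrivial dialgebra axiom---neither any of the associativity conditions nor the bar identities---is used anywhere; the verification is a direct comparison, not a simplification. This is exactly why the lemma is grouped among those that ``can be verified by elementary calculations''.

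The only genuine obstacle is clerical, and it is twofold: placing the stars correctly after the renaming, and attaching the correct signs once the right associator is subtracted. For instance, the term $(c^\ast\vdash f)\dashv b^\ast$ occurring in the right-associator formula must be transformed into $(e^\ast\vdash b)\dashv d^\ast$ and then negated, appearing as $-(e^\ast\vdash b)\dashv d^\ast$ in the first component. With thirty-two monomials to track and several involution-bearing factors in each, a systematic slot-by-slot tabulation is the surest way to avoid error.
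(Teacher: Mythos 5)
Your proposal is correct and matches the paper's intent exactly: the paper offers no explicit proof, stating only that this sequence of lemmas ``can be verified by elementary calculations,'' and the analogous Lemma~\ref{flexlemma1} is proved precisely by substituting into the relevant formula of Lemma~\ref{oldLemma5.8} as you do. I verified that substituting $a\mapsto c$, $b\mapsto d$, $c\mapsto e$, $d\mapsto f$, $e\mapsto a$, $f\mapsto b$ into the third formula of Lemma~\ref{oldLemma5.8} and subtracting from the first reproduces the displayed thirty-two monomials term for term, with no further dialgebra axioms needed beyond those already absorbed into Lemma~\ref{oldLemma5.8}.
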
  
  
\begin{lemma} \label{altlemma3}
Let $A$ be a 0-dialgebra with involution.  
In the Cayley-Dickson double $A \oplus A$, 
the result of evaluating the third identity in Definition \ref{alternativedialgebra} is
  \begin{align*}
  &
  \big( \, (a,b), \, (c,d), \, (e,f) \, \big)_\times + \big( \, (a,b), \, (e,f), 
  \, (c,d) \, \big)_\vdash
  \\
  &=
  \big(
  ( a \vdash c ) \dashv e
  - a \vdash ( c \dashv e ) 
  + ( a \vdash e ) \vdash c 
  - a \vdash ( e \vdash c )  
  \\
  &\qquad
  + a \vdash ( f \vdash d^\ast ) 
  + a \vdash ( d \dashv f^\ast ) 
  - d \dashv ( f^\ast \dashv a ) 
  - f \vdash ( d^\ast \dashv a ) 
  \\
  &\qquad \quad
  + ( c \dashv f ) \dashv b^\ast 
  - ( f \dashv b^\ast ) \vdash c 
  + ( c^\ast \dashv f ) \dashv b^\ast 
  - f \vdash ( b^\ast \vdash c^\ast )
  \\
  &\qquad \quad \quad
  + ( e \vdash d ) \dashv b^\ast
  - ( d \dashv b^\ast ) \dashv e 
  - d \dashv ( b^\ast \vdash e^\ast ) 
  + ( e^\ast \vdash d ) \dashv b^\ast,
  \\
  &\qquad
  - a^\ast \vdash ( c \dashv f )
  + c \dashv ( a^\ast \vdash f ) 
  + ( c^\ast \dashv a^\ast ) \dashv f 
  - a^\ast \vdash ( c^\ast \dashv f ) 
  \\
  &\qquad \quad
  + e \vdash ( a^\ast \vdash d )
  - a^\ast \vdash ( e \vdash d ) 
  - a^\ast \vdash ( e^\ast \vdash d ) 
  + ( e^\ast \dashv a^\ast ) \vdash d 
  \\
  &\qquad \quad \quad
  + e \vdash ( c \dashv b )
  + c \dashv ( e \dashv b ) 
  - ( e \vdash c ) \dashv b 
  - ( c \dashv e ) \dashv b 
  \\
  &\qquad \quad \quad \quad
  - ( b \vdash d^\ast ) \dashv f 
  + ( f \vdash d^\ast ) \dashv b 
  - ( b \vdash f^\ast ) \vdash d 
  + ( d \dashv f^\ast ) \dashv b 
  \big).
  \end{align*} 
\end{lemma}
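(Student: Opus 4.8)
The plan is to derive this purely from Lemma \ref{oldLemma5.8}, which already expresses all three associators of the double $A\oplus A$ componentwise in terms of the operations and involution of $A$. The third identity of Definition \ref{alternativedialgebra} has the form $(x,y,z)_\times+(x,z,y)_\vdash$, so with $x=(a,b)$, $y=(c,d)$, $z=(e,f)$ the two summands to be evaluated are $\big((a,b),(c,d),(e,f)\big)_\times$ and $\big((a,b),(e,f),(c,d)\big)_\vdash$. The first is copied verbatim from the inner-associator formula of Lemma \ref{oldLemma5.8}. The second is obtained from the right-associator formula of that lemma by the dummy-variable substitution interchanging the second and third slots, namely $c\mapsto e$, $d\mapsto f$, $e\mapsto c$, $f\mapsto d$, applied to every occurrence (including the starred ones) while leaving $(a,b)$ fixed.

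Once both summands are in componentwise form, I would simply add them using bilinearity, keeping the first components together and the second components together. Each associator contributes eight terms to each component, so the sum has sixteen terms in each component --- exactly the number displayed in the statement. The pleasant feature of this particular combination is that no further rewriting is needed: after the argument permutation, the sixteen terms produced in each component coincide one-for-one with the terms of the claimed formula. In particular the involution identities of Definition \ref{dialgebrainvolution} and the bar identities are required only to establish Lemma \ref{oldLemma5.8} itself; they are not invoked again here. This is precisely why the statement falls under the preceding remark that these lemmas ``can be verified by elementary calculations.''

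Accordingly, I expect no conceptual obstacle; the single place where care is essential is the substitution step for the right associator. The $\vdash$-formula of Lemma \ref{oldLemma5.8} mixes starred and unstarred copies of $c,d,e,f$ inside single monomials --- for instance $f\dashv(b^\ast\vdash c^\ast)$ becomes $d\dashv(b^\ast\vdash e^\ast)$, and $(e\dashv d)\dashv b^\ast$ becomes $(c\dashv f)\dashv b^\ast$ --- so the interchange $c\leftrightarrow e$, $d\leftrightarrow f$ must be applied uniformly, carrying each star along with its variable and not disturbing the fixed arguments $a,b$. After performing the substitution correctly, matching the collected sum against the displayed expression is a routine term-by-term check, which completes the proof.
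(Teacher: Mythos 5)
Your proposal is correct and matches the paper's approach: the paper offers no written proof beyond the remark that these lemmas ``can be verified by elementary calculations,'' and the intended calculation is exactly the one you describe --- copy the inner associator from Lemma \ref{oldLemma5.8}, apply the interchange $c\leftrightarrow e$, $d\leftrightarrow f$ (carrying the stars) to the right associator, and add componentwise. I checked that the resulting sixteen terms in each component do match the displayed formula one-for-one, with no further rewriting needed.
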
    
  
\begin{theorem} \label{alter}
If $A$ is a partially symmetric associative 0-dialgebra with involution, 
then $A \oplus A$ is an partially symmetric alternative 0-dialgebra with involution.
\end{theorem}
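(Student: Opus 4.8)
The plan is to separate the three claimed properties of $A \oplus A$ and dispatch the structural ones first, reducing the theorem to the verification of alternativity. That $A \oplus A$ is a 0-dialgebra with involution is exactly Proposition \ref{AAinvolution}, whose hypotheses are met since $A$ is a 0-dialgebra with involution. That $A \oplus A$ is partially symmetric is the content of the preceding lemma (the Cayley--Dickson double of a partially symmetric 0-dialgebra is again partially symmetric), whose hypothesis holds by assumption. Hence the entire burden of the proof is to show that $A \oplus A$ satisfies the three defining identities of an alternative 0-dialgebra from Definition \ref{alternativedialgebra}.

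For this I would invoke Lemmas \ref{altlemma1}, \ref{altlemma2} and \ref{altlemma3}, which express the three identities, evaluated in $A \oplus A$, as explicit pairs $(X_i,Y_i)$ of elements of $A$. It therefore suffices to prove that each of these six components vanishes identically in any partially symmetric associative 0-dialgebra with involution. Before grinding through all six, I would check whether Lemma \ref{LRassociators}, which converts a right associator into the conjugate of a left associator, lets me deduce one identity from another (for instance, the first identity rewritten via that lemma becomes a self-conjugacy statement $(a,b,c)_\dashv \equiv \big((a^\ast,b^\ast,c^\ast)_\dashv\big)^\ast$ for the left associator), and whether the second component $Y_i$ of each pair is the image of the first under the involution on $A$; any such relation would cut the number of independent verifications. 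The conservative plan, matching the three lemmas provided, is to verify all three.

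The core computation proceeds in two stages. First, since $A$ is associative, I would put every monomial into the normal form valid in an associative 0-dialgebra: a product of three factors $x,y,z$ has exactly three possible values, determined solely by which factor carries the bar, and the left, inner and right associativity identities together with the bar identities let me rewrite any bracketing into the appropriate one of these three forms. This immediately collapses the pure-associator lines (such as $(a \dashv c) \dashv e - a \dashv (c \dashv e) + (e \vdash c) \vdash a - e \vdash (c \vdash a)$ in Lemma \ref{altlemma1}) to zero. Second, for the remaining terms, which carry involutions on one or more factors, I would use partial symmetry: Lemma \ref{movestar} to move a star across a product inside a symmetrized expression, together with the rule that a star inside a Leibniz bracket may be replaced by a sign. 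The terms in each component are grouped precisely so that, after normalization and these star manipulations, they cancel in pairs.

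The main obstacle is the bookkeeping rather than any single conceptual point: each component is an alternating sum of sixteen monomials in $a,b,c,d,e,f$ and their conjugates, and one must track the position of the bar and the placement of each star simultaneously while reassociating. The substantive subtlety is that associativity alone does not close the computation --- the star-bearing terms genuinely require partial symmetry in order to be matched, so both hypotheses are used in an essential and intertwined way. Confirming that the particular sixteen-term combination in each case cancels completely, with no residual term surviving, is where the real effort lies.
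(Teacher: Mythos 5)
Your proposal matches the paper's proof in essentially every respect: the structural claims are dispatched by Proposition \ref{AAinvolution} and the lemma that partial symmetry passes to the double, and the three alternative identities are verified by taking the expanded expressions of Lemmas \ref{altlemma1}--\ref{altlemma3}, grouping terms by the variables they involve, and killing each group using associativity (for the pure associator groups) or partial symmetry via Lemma \ref{movestar} and the star-to-sign rule inside Leibniz brackets (for the star-bearing groups). The only cosmetic difference is that the paper organizes the cancellation into groups of four terms, each identified with a manifestly vanishing expression such as $(a,c,e)_\dashv + (e,c,a)_\vdash$, $\{a,\mathrm{sym}(d\dashv f^\ast)\}$, or $\mathrm{sym}(\{a,c\})\dashv f$, rather than pairwise cancellation after normalization.
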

 
\begin{proof}
It remains to verify that the expressions in Lemmas \ref{altlemma1} to \ref{altlemma3} 
vanish in every partially symmetric associative 0-dialgebra.
We collect groups of four terms involving the same variables;
in every case, the result is 0.
For the first identity:
  \allowdisplaybreaks
  \begin{align*}
  ( a \dashv c ) \dashv e
  - a \dashv ( c \dashv e )
  + ( e \vdash c ) \vdash a  
  - e \vdash ( c \vdash a ) 
  &=
  ( a, c, e )_\dashv + ( e, c, a )_\vdash,
  \\
  ( a \dashv d ) \dashv f^\ast
  - ( d \dashv f^\ast ) \vdash a     
  + a \dashv ( f \vdash d^\ast ) 
  - f \vdash ( d^\ast \vdash a )
  &= 
  \{ a, \mathrm{sym}( d \dashv f^\ast ) \},
  \\
  ( c^\ast \vdash b ) \dashv f^\ast 
  - b \dashv ( f^\ast \vdash c^\ast ) 
  + ( c^\ast \dashv f ) \vdash b^\ast 
  - f \vdash ( b^\ast \dashv c^\ast ) 
  &=
  - \{ \mathrm{sym}( b \dashv f^\ast ), c^\ast \},
  \\
  - b \dashv ( d^\ast \dashv e ) 
  + e \vdash ( b \dashv d^\ast ) 
  + ( e \vdash d ) \vdash b^\ast 
  - ( d \vdash b^\ast ) \dashv e
  &=
  \{ \mathrm{sym}( b \dashv d^\ast ), e \},
  \\
  - a^\ast \dashv ( c^\ast \dashv f )  
  + ( c^\ast \vdash a^\ast ) \dashv f 
  + a \dashv ( c \dashv f )
  - ( c \vdash a ) \dashv f 
  &=
  \mathrm{sym}( \{ a, c \} ) \dashv f,
  \\
  - a^\ast \dashv ( e \vdash d ) 
  + a \dashv ( e^\ast \vdash d ) 
  + e \vdash ( a^\ast \dashv d ) 
  - e^\ast \vdash ( a \dashv d )
  &= 
  \mathrm{sym}( \{ a, e^\ast \} ) \dashv d,
  \\
  e \vdash ( c \vdash b ) 
  - ( c \dashv e ) \vdash b 
  + ( c^\ast \dashv e^\ast ) \vdash b 
  - e^\ast \vdash ( c^\ast \vdash b ) 
  &=
  \mathrm{sym}( \{ c^\ast, e^\ast \} ) \vdash b,
  \\
  - ( b \dashv d^\ast ) \dashv f
  + ( f \vdash d^\ast ) \vdash b 
  - ( f \vdash d^\ast ) \vdash b 
  + ( b \dashv d^\ast ) \dashv f
  &=
  0.
  \end{align*} 
For the second identity:
  \allowdisplaybreaks
  \begin{align*}
  ( a \dashv c ) \dashv e
  - a \dashv ( c \dashv e ) 
  - ( c \vdash e ) \vdash a
  + c \vdash ( e \vdash a )
  &= 
  ( a, c, e )_\dashv - ( c, e, a )_\vdash,
  \\
  a \dashv ( f \vdash d^\ast ) 
  - f \vdash ( d^\ast \vdash a )
  - ( a \dashv f ) \dashv d^\ast 
  + ( f \dashv d^\ast ) \vdash a 
  &= 
  0,
  \\
  b \dashv ( f^\ast \dashv c ) 
  + ( c^\ast \dashv f ) \vdash b^\ast 
  - f \vdash ( b^\ast \dashv c^\ast ) 
  - c \vdash ( b \dashv f^\ast ) 
  &= 
  \mathrm{sym}( b \dashv f^\ast, c ),
  \\
  b \dashv ( d^\ast \vdash e^\ast ) 
  - ( e^\ast \vdash b ) \dashv d^\ast 
  - ( d \vdash b^\ast ) \dashv e
  + ( e \vdash d ) \vdash b^\ast
  &= 
  - \mathrm{sym}( d \vdash b^\ast, e ),
  \\
  - a \dashv ( c^\ast \vdash f ) 
  + c^\ast \vdash ( a \dashv f ) 
  - a^\ast \dashv ( c^\ast \dashv f ) 
  + ( c^\ast \vdash a^\ast ) \dashv f 
  &= 
  - \{ \mathrm{sym}(a), c \} \dashv f,
  \\
  - a \dashv ( e \dashv d ) 
  + ( e \vdash a ) \dashv d 
  + e \vdash ( a^\ast \dashv d ) 
  - a^\ast \dashv ( e \vdash d ) 
  &= 
  - \{ \mathrm{sym}(a), e \} \dashv d,
  \\
  e \vdash ( c \vdash b )
  - ( c \dashv e ) \vdash b 
  - ( e^\ast \dashv c^\ast ) \vdash b 
  + c^\ast \vdash ( e^\ast \vdash b ) 
  &= 
  \mathrm{sym}( \{ e, c \} ) \vdash b,
  \\
  - ( b \dashv d^\ast ) \dashv f
  + ( f \vdash d^\ast ) \vdash b
  + ( d \vdash f^\ast ) \vdash b 
  - ( b \dashv f^\ast ) \dashv d 
  &= 
  -\{ b, \mathrm{sym}( d^\ast \dashv f ) \};
  \end{align*} 
in the last case we have used Lemma \ref{movestar}.  
For the third identity:
  \allowdisplaybreaks
  \begin{align*}
  ( a \vdash c ) \dashv e
  - a \vdash ( c \dashv e ) 
  + ( a \vdash e ) \vdash c 
  - a \vdash ( e \vdash c )
  &=  
  ( a, c, e )_\times + ( a, e, c )_\vdash,
  \\
  a \vdash ( f \vdash d^\ast ) 
  + a \vdash ( d \dashv f^\ast ) 
  - d \dashv ( f^\ast \dashv a ) 
  - f \vdash ( d^\ast \dashv a ) 
  &=  
  - \{ \mathrm{sym}( d \dashv f^\ast ), a \},
  \\
  ( c \dashv f ) \dashv b^\ast 
  - ( f \dashv b^\ast ) \vdash c 
  + ( c^\ast \dashv f ) \dashv b^\ast 
  - f \vdash ( b^\ast \vdash c^\ast )
  &=  
  \{ \mathrm{sym}(c), f \dashv b^\ast \},
  \\
  ( e \vdash d ) \dashv b^\ast
  - ( d \dashv b^\ast ) \dashv e 
  - d \dashv ( b^\ast \vdash e^\ast ) 
  + ( e^\ast \vdash d ) \dashv b^\ast
  &=  
  - \{ d \dashv b^\ast, \mathrm{sym}(e) \},
  \\
  - a^\ast \vdash ( c \dashv f )
  + c \dashv ( a^\ast \vdash f ) 
  + ( c^\ast \dashv a^\ast ) \dashv f 
  - a^\ast \vdash ( c^\ast \dashv f ) 
  &=  
  \{ \mathrm{sym}(c), a^\ast \} \dashv f,
  \\
  e \vdash ( a^\ast \vdash d )
  - a^\ast \vdash ( e \vdash d ) 
  - a^\ast \vdash ( e^\ast \vdash d ) 
  + ( e^\ast \dashv a^\ast ) \vdash d 
  &=  
  \{ \mathrm{sym}(e), a^\ast \} \vdash d,
  \\
  - ( e \vdash c ) \dashv b 
  + e \vdash ( c \dashv b )
  - ( c \dashv e ) \dashv b 
  + c \dashv ( e \dashv b ) 
  &=  
  - ( e, c, b )_\times - ( c, e, b )_\dashv,
  \\
  - ( b \vdash d^\ast ) \dashv f 
  + ( f \vdash d^\ast ) \dashv b 
  - ( b \vdash f^\ast ) \vdash d 
  + ( d \dashv f^\ast ) \dashv b 
  &=  
  \{ \mathrm{sym}( d \dashv f^\ast ), b \};
  \end{align*}
here again we have used Lemma \ref{movestar}.
\end{proof}


\section{Alternative dialgebras and Jordan dialgebras}

In this section we make a slight digression and consider the polynomial identities satisfied by
bilinear operations in associative and alternative dialgebras.

The Leibniz bracket in an associative 0-dialgebra satisfies the defining identities for Leibniz algebras.
In an alternative 0-dialgebra, the Leibniz bracket satisfies the defining identities for Malcev dialgebras \cite{BPSO}.

The Jordan diproduct $ab = a \dashv b + b \vdash a$ in an associative 0-dialgebra satisfies the defining identities
for Jordan dialgebras. 

\begin{definition} \label{JDdefinition}
\cite{Bremner,Kolesnikov,Felipe}
Over a field of characteristic not 2 or 3, a \textbf{(right) Jordan dialgebra} 
is a vector space with a bilinear operation $ab$ satisfying
  \begin{alignat*}{2}
  &\textbf{right commutativity:}
  &\qquad
  a(bc) &\equiv a(cb),
  \\
  &\textbf{right Jordan identity:}
  &\qquad
  ( b a^2 ) a &\equiv ( b a ) a^2,
  \\
  &\textbf{right Osborn identity:}
  &\qquad
  ( a, b, c^2 ) &\equiv 2 ( a c, b, c ).
  \end{alignat*}
\end{definition}

For the Jordan diproduct in an alternative 0-dialgebra, we have the following theorem, originally proved by 
Gubarev and Kolesnikov \cite[Example 2, p.~505]{GK} using the theory of conformal algebras.  

\begin{theorem}
If $A$ is an alternative 0-dialgebra, then the underlying vector space of $A$ becomes 
a Jordan dialgebra with respect to the Jordan diproduct.
\end{theorem}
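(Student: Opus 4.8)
The plan is to verify directly the three defining identities of Definition \ref{JDdefinition} for the Jordan diproduct $ab = a \dashv b + b \vdash a$, where throughout $a^2 = a \dashv a + a \vdash a$ (and likewise $c^2 = c \dashv c + c \vdash c$). The only tools available are the left and right bar identities \eqref{baridentities} and the three alternative associator identities of Definition \ref{alternativedialgebra}, so every computation must be reduced to these. Since the ground field has characteristic different from $2$ and $3$, each identity may be replaced by its full multilinearization; I would exploit this to turn the quadratic occurrences of $a$ in the Jordan identity and of $c$ in the Osborn identity into genuinely multilinear relations, which are easier to match against the multilinear alternative axioms.

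Right commutativity is immediate and needs no alternativity. Expanding $a(bc)$ and $a(cb)$ with the diproduct gives
\begin{align*}
a(bc) - a(cb)
&= \big[ a \dashv (b \dashv c) - a \dashv (b \vdash c) \big]
 - \big[ a \dashv (c \dashv b) - a \dashv (c \vdash b) \big] \\
&\quad + \big[ (b \dashv c) \vdash a - (b \vdash c) \vdash a \big]
 - \big[ (c \dashv b) \vdash a - (c \vdash b) \vdash a \big],
\end{align*}
and each of the four bracketed differences vanishes by the left or right bar identity. Hence $a(bc) \equiv a(cb)$ already in every $0$-dialgebra.

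For the right Jordan identity $(ba^2)a \equiv (ba)a^2$ and the right Osborn identity $(a,b,c^2) \equiv 2(ac,b,c)$, where $(x,y,z) = (xy)z - x(yz)$ is the associator of the diproduct, the plan is a controlled expansion followed by reduction, in the same spirit as the proof of Theorem \ref{alter}. First I would assemble a short toolkit: the partial linearizations of the three identities of Definition \ref{alternativedialgebra}, together with the further associator relations they imply, since the raw axioms supply only three relations whereas the expansions below produce many distinct associator shapes. Then I would expand each side entirely into one-sided products $\dashv, \vdash$, regroup the resulting monomials into batches built from the left, inner and right associators $(\cdot,\cdot,\cdot)_\dashv$, $(\cdot,\cdot,\cdot)_\times$, $(\cdot,\cdot,\cdot)_\vdash$, and collapse each batch using one defining identity or a bar identity, exactly as the four-term groupings were made to vanish in the proof of Theorem \ref{alter}.

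The main obstacle is the right Jordan identity: it has degree four with $a$ appearing three times, so its expansion produces a large number of one-sided monomials, and the difficulty is entirely organizational. I would linearize $a \mapsto a_1, a_2, a_3$, reducing to a multilinear identity, and the real work is partitioning the monomials into groups, each of which is a combination of left, inner and right associators that vanishes by a single identity of Definition \ref{alternativedialgebra} or by a bar identity. Keeping every product one-sided while tracking these groupings is where errors are easiest to make, exactly as in Lemmas \ref{altlemma1}--\ref{altlemma3}. As an independent route one may instead invoke the conformal-algebra argument of Gubarev and Kolesnikov \cite{GK}, which bypasses the brute-force expansion; but the direct verification above keeps the proof self-contained within the present framework.
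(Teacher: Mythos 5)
Your verification of right commutativity is correct and matches the paper's (one line from the bar identities). For the right Jordan identity your plan is viable but not executed: you never produce the actual grouping of monomials. For comparison, the paper does this computation in full \emph{without} linearizing: it expands $(ba)a^2$ and $(ba^2)a$, uses the bar identities to collapse each side to four doubled terms, and then applies the three repeated-variable consequences $(x,y,y)_\dashv \equiv 0$, $(x,y,x)_\times \equiv 0$, $(x,x,y)_\vdash \equiv 0$ of Definition \ref{alternativedialgebra} to transform one side into the other in three short steps. Your proposed linearization $a \mapsto a_1,a_2,a_3$ is legitimate in characteristic $\ne 2,3$ but multiplies the bookkeeping without being needed; in any case, until the partition into vanishing batches is exhibited, this part of the argument is a sketch, not a proof.

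The genuine gap is the right Osborn identity. You assert that the same strategy --- expand, then partition the monomials into groups each of which vanishes by a single defining identity or bar identity --- will close the argument, but you give no evidence that such a partition exists, and the paper's own experience indicates it does not exist in any simple form: the authors report that a Maple computation expressing the Osborn identity in terms of the degree-4 consequences of the alternative 0-dialgebra identities required, at best, a certificate with 61 terms and coefficients $\pm 1, \pm 2$, which they declined to print. So the Osborn identity is not a ``controlled expansion'' on the scale of Lemmas \ref{altlemma1}--\ref{altlemma3}; it needs either an explicit (large) linear combination of consequences, a computer verification, or the conformal-algebra argument of Gubarev and Kolesnikov \cite{GK} --- which is in fact the originally published proof and which you mention only as an optional alternative. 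As it stands, two of the three defining identities of Definition \ref{JDdefinition} are unverified in your proposal, and for the Osborn identity the proposed method is likely to fail in the naive form you describe.
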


\begin{proof}
Right commutativity follows easily from the bar identities.
Next, we check the right Jordan identity. 
For $(ba)a^2$ we obtain:
\allowdisplaybreaks
\begin{align*}
&
( b \dashv a + a \vdash b ) \dashv ( a \dashv a + a \vdash a ) 
+ 
( a \dashv a + a \vdash a ) \vdash ( b \dashv a + a \vdash b ) 
\\
&= 
( b \dashv a ) \dashv ( a \dashv a ) + ( a \vdash b ) \dashv ( a \dashv a ) 
+ 
( b \dashv a ) \dashv ( a \vdash a ) + ( a \vdash b ) \dashv ( a \vdash a )
\\
&\quad
+ 
( a \dashv a ) \vdash ( b \dashv a ) + ( a \vdash a ) \vdash ( b \dashv a ) 
+ 
( a \dashv a ) \vdash ( a \vdash b ) + ( a \vdash a ) \vdash ( a \vdash b )  
\\
& \equiv 
2 ( b \dashv a ) \dashv ( a \dashv a ) + 2 ( a \vdash b ) \dashv ( a \dashv a ) 
+ 
2 ( a \vdash a ) \vdash ( b \dashv a ) + 2 ( a \vdash a ) \vdash (a \vdash b ).
\end{align*}
On the other hand, for $(b a^2 ) a$ we obtain:
\begin{align*}
&
\big( b \dashv ( a \dashv a + a \vdash a ) + ( a \dashv a + a \vdash a ) \vdash b \big) \dashv a 
\\
&\quad 
+  
a \vdash \big( b \dashv ( a \dashv a + a \vdash a ) + ( a \dashv a + a \vdash a ) \vdash b \big) 
\\
&= 
\big( b \dashv ( a \dashv a ) \big) \dashv a 
+ 
\big( b \dashv ( a \vdash a ) \big) \dashv a 
+ 
\big( ( a \dashv a ) \vdash b \big) \dashv a 
+ 
\big( ( a \vdash a ) \vdash b \big) \dashv a 
\\
&\quad
+  
a \vdash \big( b \dashv ( a \dashv a ) \big) 
+ 
a \vdash \big( b \dashv ( a \vdash a ) \big) 
+ 
a \vdash \big( ( a \dashv a ) \vdash b \big)
+ 
a \vdash \big( ( a \vdash a ) \vdash b \big) 
\\
&\equiv
2 \big( b \dashv ( a \dashv a ) \big) \dashv a
+ 
2 \big( ( a \vdash a ) \vdash b \big) \dashv a
+
2a \vdash \big( b \dashv ( a \dashv a ) \big) 
+ 
2a \vdash \big( ( a \vdash a ) \vdash b \big).
\end{align*}
Since $A$ is an alternative 0-dialgebra \cite{Liu} we have the identities,
\[
( x, y, y )_\dashv \equiv 0, 
\qquad 
( x, y, x )_\times \equiv 0,
\qquad
( x, x, y )_\vdash \equiv 0,
\]
or more explicitly,
\[
( x \dashv y ) \dashv y \equiv x \dashv ( y \dashv y), 
\quad
( x \vdash y ) \dashv x \equiv x \vdash ( y \dashv x),
\quad 
( x \vdash x ) \vdash y \equiv x \vdash ( x \vdash y).
\quad 
\]
Applying these identities, we see that $(ba^2)a$ equals
\begin{align*}
&
2 \big( ( b \dashv a ) \dashv a \big) \dashv a
+ 
2 \big( a \vdash ( a \vdash b ) \big) \dashv a 
+
2a \vdash \big( ( b \dashv a ) \dashv a \big) 
+ 
2a \vdash \big( a \vdash ( a \vdash b ) \big) 
\\
&\equiv 
2 \big( ( b \dashv a ) \dashv a \big) \dashv a 
+
2 a \vdash \big( ( a \vdash b ) \dashv a \big)
+
2 \big( a \vdash ( b \dashv a ) \big) \dashv a 
+ 
2 a \vdash \big( a \vdash ( a \vdash b ) \big)
\\
&\equiv 
2 \big( ( b \dashv a ) \dashv a \big) \dashv a 
+
2 a \vdash \big( a \vdash ( b \dashv a ) \big)
+
2 \big( ( a \vdash b ) \dashv a \big) \dashv a 
+ 
2 a \vdash \big( a \vdash ( a \vdash b ) \big)
\end{align*}
and this is equivalent to the expression for $( ba ) a^2$.

We used the computer algebra system Maple to express the Osborn identity in terms of
the consequences in degree 4 of the defining identities for alternative 0-dialgebras.
The simplest expression we found had 61 terms and coefficients $\pm 1, \pm 2$.
Since a formula of such complexity does not have much intrinsic interest, and will not
be needed in the rest of this paper, we decided not to include it.
\end{proof}


\section{From alternative dialgebras to flexible dialgebras}

Applying the KP algorithm to the flexible identity,
  \[
  (ab)c - a(bc) + (cb)a - c(ba) \equiv 0,
  \]
gives the bar identities together with these three identities,
  \allowdisplaybreaks
  \begin{align*}
  ( a \dashv b ) \dashv c - a \dashv ( b \dashv c ) + ( c \vdash b ) \vdash a - c \vdash ( b \vdash a ) &\equiv 0,
  \\
  ( a \vdash b ) \dashv c - a \vdash ( b \dashv c ) + ( c \vdash b ) \dashv a - c \vdash ( b \dashv a ) &\equiv 0,
  \\  
  ( a \vdash b ) \vdash c - a \vdash ( b \vdash c ) + ( c \dashv b ) \dashv a - c \dashv ( b \dashv a ) &\equiv 0,
  \end{align*}  
where the first and third are equivalent.

\begin{definition} \label{flexibledialgebra}
A \textbf{flexible 0-dialgebra} satisfies the bar identities together with
  \[
  (a,b,c)_\dashv + (c,b,a)_\vdash \equiv 0,
  \qquad
  (a,b,c)_\times + (c,b,a)_\times \equiv 0.
  \]
The first identity coincides with the first identity in the definition of alternative 0-dialgebra.
Every alternative 0-dialgebra is a flexible 0-dialgebra.
\end{definition}

\begin{lemma} \label{flexlemma1}
Let $A$ be a 0-dialgebra with involution. 
In the Cayley-Dickson double $A \oplus A$, the result of evaluating 
the second identity in Definition \ref{flexibledialgebra} is as follows:
  \begin{align*}
  &
  \big( \, (a,b), \, (c,d), \, (e,f) \, \big)_\times + \big( \, (e,f), \, (c,d), 
  \, (a,b) \, \big)_\times
  \\
  &=
  \big(
  ( a \vdash c ) \dashv e
  - a \vdash ( c \dashv e ) 
  + ( e \vdash c ) \dashv a 
  - e \vdash ( c \dashv a )  
  \\
  &\qquad
  - f \vdash ( d^\ast \dashv a ) 
  + a \vdash ( f \vdash d^\ast )   
  - (d \dashv  f^\ast) \dashv a  
  + (a \vdash  d) \dashv f^\ast
  \\
  &\qquad \quad
  -  f \vdash (b^\ast \vdash c^\ast) 
  + ( c^\ast \dashv f ) \dashv b^\ast 
  - b \vdash ( f^\ast \vdash c^\ast )
  + ( c^\ast \dashv b ) \dashv f^\ast 
  \\
  &\qquad \quad \quad
  - ( d \dashv b^\ast ) \dashv e
  + ( e \vdash d ) \dashv b^\ast 
  - b \vdash ( d^\ast \dashv e^\ast ) 
  + e \vdash (b \vdash d^\ast),
  \\
  &\qquad
  ( c^\ast \dashv a^\ast ) \dashv f 
  - a^\ast \vdash ( c^\ast \dashv f )
  + a \vdash ( c \dashv f )
  - (c \dashv a) \dashv f   
  \\
  &\qquad \quad
  + e \vdash ( a^\ast \vdash d )
  - a^\ast \vdash ( e \vdash d ) 
  + a \vdash ( e^\ast \vdash d ) 
  - e^\ast \vdash ( a \vdash d )
  \\
  &\qquad \quad \quad
  + e \vdash ( c \dashv b )
  - ( c \dashv e ) \dashv b 
  + ( c^\ast \dashv e^\ast ) \dashv b  
  - e^\ast \vdash ( c^\ast \dashv b ) 
  \\
  &\qquad \quad \quad \quad
  - ( b \vdash d^\ast ) \dashv f 
  + ( f \vdash d^\ast ) \dashv b 
  - ( f \vdash d^\ast ) \dashv b 
  + ( b \vdash d^\ast ) \dashv f 
  \big).
  \end{align*} 
\end{lemma}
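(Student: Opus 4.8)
The plan is to reduce the statement to a direct substitution into the inner associator formula already recorded in Lemma \ref{oldLemma5.8}, followed by careful collection of terms. By Definition \ref{flexibledialgebra}, the second flexible identity is $(a,b,c)_\times + (c,b,a)_\times \equiv 0$, so evaluating it on the triple $\big((a,b),(c,d),(e,f)\big)$ in the Cayley-Dickson double amounts to computing $\big((a,b),(c,d),(e,f)\big)_\times + \big((e,f),(c,d),(a,b)\big)_\times$. The first summand is written out explicitly in Lemma \ref{oldLemma5.8}, and the second is obtained from the very same formula by the substitution $a \leftrightarrow e$, $b \leftrightarrow f$, which interchanges the outer first and third arguments while fixing the middle argument $(c,d)$. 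Since this substitution commutes with the involution, it introduces no starred variables beyond those already occurring.

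First I would record the two component expressions for $\big((a,b),(c,d),(e,f)\big)_\times$ from Lemma \ref{oldLemma5.8}, namely eight summands in each of the two components of $A \oplus A$. Next I would apply $a \leftrightarrow e$, $b \leftrightarrow f$ to obtain the analogous expression for $\big((e,f),(c,d),(a,b)\big)_\times$, invoking $(x^\ast)^\ast = x$ wherever a star comes to rest on a swapped letter. Adding the two expressions component by component produces sixteen terms in each component; I would then partition them into groups of four sharing the same three letters, so as to match the layout of the asserted formula line by line. As in Lemmas \ref{altlemma1}--\ref{altlemma3}, the four terms $-(b \vdash d^\ast)\dashv f$, $+(f \vdash d^\ast)\dashv b$ and their opposites appear together and are deliberately kept in grouped form rather than cancelled, so that the expression aligns term-for-term with the decomposition into associators and Leibniz brackets to be used later.

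The only tools permitted here are the involution laws of Definition \ref{dialgebrainvolution} and the bar identities, since the hypothesis is merely that $A$ is a 0-dialgebra with involution; no associativity, flexibility, or partial symmetry may be assumed. Hence nothing genuinely collapses, and the substance of the lemma lies entirely in organizing the thirty-two resulting monomials correctly. The main obstacle is therefore bookkeeping rather than mathematical depth: one must track signs, the migration of stars under the swap, and the choice of inner bar-operation in each monomial, so that the final grouping agrees verbatim with the stated expression. This is precisely the kind of elementary but error-prone computation by which the preceding lemmas were verified, and it is most safely confirmed with a computer algebra system.
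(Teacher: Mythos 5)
Your proposal is correct and is essentially the paper's own proof, which simply says ``straightforward calculation using the middle equation of Lemma \ref{oldLemma5.8}'': you substitute into the inner-associator formula twice (once directly, once with $a \leftrightarrow e$, $b \leftrightarrow f$), add, and regroup the thirty-two monomials. Your observation that the last four terms are deliberately left uncancelled to match the later decomposition is consistent with how the paper presents the result.
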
 

\begin{proof}
Straightforward calculation using the middle equation of Lemma \ref{oldLemma5.8}.
\end{proof}

To prove the next theorem, we need to impose further conditions on a partially symmetric 0-dialgebra with involution.
These conditions are the KP identities corresponding to the condition that symmetric elements 
in an algebra with involution associate with every element:
\[
( ( x {+} x^\ast ) y )z \equiv ( x {+} x^\ast)( yz ),
\;\;
( yz )( x {+} x^\ast ) \equiv y( z ( x {+} x^\ast ) ),
\;\;
( y ( x {+} x^\ast ) )z \equiv y( ( x {+} x^\ast ) z ).
\]
We make $x$, $y$ and $z$ in turn the central argument, 
write the results in terms of associators,
and obtain the following definition.

\begin{definition}
A partially symmetric 0-dialgebra is a \textbf{symmetric 0-dialgebra} if 
the following expressions vanish identically:
\begin{equation}
\label{Juana}
\left\{
\begin{array}{lll}
( x, y, z )_\dashv + ( x^\ast, y, z )_\dashv,
&
( x, y, z )_\times + ( x^\ast, y, z )_\times,
&
( x, y, z )_\vdash + ( x^\ast, y, z )_\vdash,
\\[4pt]
( x, y, z )_\dashv + ( x, y^\ast, z )_\dashv,
&
( x, y, z )_\times + ( x, y^\ast, z )_\times,
&
( x, y, z )_\vdash + ( x, y^\ast, z )_\vdash,
\\[4pt]
( x, y, z )_\dashv + ( x, y, z^\ast )_\dashv,
&
( x, y, z )_\times + ( x, y, z^\ast )_\times,
&
( x, y, z )_\vdash + ( x, y, z^\ast )_\vdash.
\end{array}
\right.
\end{equation}
\end{definition}

\begin{lemma} \label{lemmaflexible}
The following expressions vanish identically in every flexible 0-dialgebra with involution
which satisfies the equations \eqref{Juana}: 
\[
\begin{array}{lll}
( x, y, z )_\dashv + ( z^\ast, y^\ast, x )_\vdash,
&
( x, y, z )_\dashv + ( z, y^\ast, x^\ast )_\vdash,
&
( x, y, z )_\times + ( z^\ast, y^\ast, x )_\times,
\\[4pt]
( x, y, z )_\times + ( z, y^\ast, x^\ast )_\times,
&
( x, y, z )_\dashv - ( x^\ast, y^\ast, z )_\dashv,
&
( x, y, z )_\times - ( x^\ast, y^\ast, z )_\times.
\end{array}
\]
\end{lemma}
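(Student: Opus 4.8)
The plan is to reduce the whole lemma to two simple principles: the sign behavior of associators under starring a single argument, which comes from \eqref{Juana}, and the two defining identities of a flexible $0$-dialgebra. First I would record the key consequence of \eqref{Juana}: for each of the three associator types, starring any one of the three arguments negates the associator, and consequently starring any two arguments leaves it unchanged. This ``double-star invariance'' does essentially all of the work.

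With this in hand, the two identities carrying a minus sign follow at once. In $(x,y,z)_\dashv - (x^\ast,y^\ast,z)_\dashv$ the second associator has stars on its first two arguments, so double-star invariance gives $(x^\ast,y^\ast,z)_\dashv \equiv (x,y,z)_\dashv$ and the difference vanishes; the identical argument applied to the inner associator disposes of $(x,y,z)_\times - (x^\ast,y^\ast,z)_\times$.

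For the four remaining identities I would bring in the defining identities of flexibility. The first, $(x,y,z)_\dashv + (z,y,x)_\vdash \equiv 0$, rewrites the left associator as $-(z,y,x)_\vdash$. Applying this to $(x,y,z)_\dashv + (z^\ast,y^\ast,x)_\vdash$ and to $(x,y,z)_\dashv + (z,y^\ast,x^\ast)_\vdash$ leaves in each case a right associator, $(z^\ast,y^\ast,x)_\vdash$ or $(z,y^\ast,x^\ast)_\vdash$, carrying exactly two stars; double-star invariance collapses each to $(z,y,x)_\vdash$, so the two terms cancel. Symmetrically, the second defining identity $(x,y,z)_\times + (z,y,x)_\times \equiv 0$ handles $(x,y,z)_\times + (z^\ast,y^\ast,x)_\times$ and $(x,y,z)_\times + (z,y^\ast,x^\ast)_\times$ by the same two-star reduction.

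The only point requiring care is the bookkeeping: in each of the six cases one must confirm that the number of starred arguments surviving after the flexible rewrite is even, so that the sign is preserved rather than flipped. There is no genuine computational obstacle—the entire content lies in organizing the six cases and tracking the parity of the stars—so once double-star invariance is stated the proof is short.
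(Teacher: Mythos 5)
Your proposal is correct and follows essentially the same route as the paper's (very terse) proof: use the identities \eqref{Juana} to show that starring one argument negates each associator, hence starring two arguments preserves it, and then apply the two flexibility identities to match the remaining terms. Your explicit formulation of ``double-star invariance'' and the case-by-case parity check is just a fleshed-out version of what the paper leaves to the reader.
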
 

\begin{proof}
Use equations \eqref{Juana} to add stars to the variables, 
and the flexible 0-dialgebra identities to change from one type of associator to another.
\end{proof} 

\begin{theorem}
If $A$ is a flexible symmetric 0-dialgebra, 
then its Cayley-Dickson double $A \oplus A$ is also a flexible symmetric 0-dialgebra.
In particular, this holds if $A$ is an alternative symmetric 0-dialgebra.
\end{theorem}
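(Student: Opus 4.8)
The plan is to verify, one ingredient at a time, the definition of a flexible symmetric $0$-dialgebra for the double $A \oplus A$. Three of these come for free: by Proposition \ref{AAinvolution}, $A \oplus A$ is a $0$-dialgebra with involution; and since $A$ is partially symmetric, so is $A \oplus A$ by the preceding lemma asserting that partial symmetry is inherited by the Cayley-Dickson double. The ``in particular'' clause then follows at once, since by Definition \ref{flexibledialgebra} every alternative $0$-dialgebra is flexible, so an alternative symmetric $A$ is in particular flexible symmetric. It therefore remains to check (i) the two flexible identities and (ii) the nine symmetric conditions \eqref{Juana} in $A \oplus A$.

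For (i) I would imitate the proof of Theorem \ref{alter}. The first flexible identity coincides with the first alternative identity, so its value in $A \oplus A$ is given by Lemma \ref{altlemma1}; collecting the sixteen terms of each component into groups of four exactly as in Theorem \ref{alter}, the single group $(a,c,e)_\dashv + (e,c,a)_\vdash$ is now killed directly by flexibility of $A$ (rather than associativity), while every other group is of the form $\{\mathrm{sym}(-),-\}$, $\{-,\mathrm{sym}(-)\}$, or $\mathrm{sym}(\{-,-\})\dashv(-)$, and hence vanishes by partial symmetry and its consequence $\mathrm{sym}(\{x,y\}) \equiv 0$. For the second flexible identity I would use Lemma \ref{flexlemma1} in the same way. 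Here the leading group $(a \vdash c) \dashv e - a \vdash (c \dashv e) + (e \vdash c) \dashv a - e \vdash (c \dashv a) = (a,c,e)_\times + (e,c,a)_\times$ vanishes by flexibility. A typical remaining group, after rewriting products as associators and using $(d \dashv f^\ast)^\ast = f \vdash d^\ast$, takes the shape
\[
(a,d,f^\ast)_\times + (f,d^\ast,a)_\times - \{\,\mathrm{sym}(d \dashv f^\ast),\, a\,\};
\]
the bracket vanishes by partial symmetry, and the two associators are precisely $(x,y,z)_\times + (z^\ast,y^\ast,x)_\times$ with $(x,y,z) = (a,d,f^\ast)$, which vanishes by Lemma \ref{lemmaflexible}. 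The remaining groups are handled identically, invoking Lemma \ref{movestar} wherever stars must be moved, so that each collapses to a flexible identity, to one of the expressions of Lemma \ref{lemmaflexible}, or to a partial-symmetry bracket.

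For (ii) I would expand each associator of $A \oplus A$ by Lemma \ref{oldLemma5.8} and add its starred counterpart, using $(a,b)^\ast = (a^\ast,-b)$; for instance $(X,Y,Z)_\dashv + (X^\ast,Y,Z)_\dashv$ with $X = (a,b)$. In each such sum the terms carrying a factor $b^\ast$ cancel in pairs because of the sign flip $b \mapsto -b$, and the surviving terms per component combine into an expression in which $a$ has been replaced by $\mathrm{sym}(a)$: concretely an associator $(\mathrm{sym}(a),c,e)_\dashv$ together with $\{\mathrm{sym}(a),\, f\vdash d^\ast\} + (f,d^\ast,\mathrm{sym}(a))_\vdash$. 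Since $\mathrm{sym}(a)$ is symmetric, \eqref{Juana} for $A$ forces any associator with a symmetric entry to vanish, and partial symmetry forces the bracket to vanish; hence the whole sum is zero. The other eight conditions of \eqref{Juana} follow by the same computation, permuting the roles of the three arguments and the three associator types.

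The main obstacle is organizational rather than conceptual: both (i) and (ii) are large term-matching computations, with roughly thirty-two terms per identity that must be partitioned into the correct groups of four before any cancellation becomes visible. The delicate points are tracking the stars (which is exactly the purpose of Lemma \ref{movestar}) and selecting, in each group of (i), the right member of Lemma \ref{lemmaflexible} or the right flexible identity, since a single misplaced star or transposed argument destroys the cancellation. Once the bookkeeping is fixed, every group collapses by flexibility, by Lemma \ref{lemmaflexible}, or by partial symmetry, and the proof is complete.
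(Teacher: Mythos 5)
Your overall strategy is the paper's: inherit the $0$-dialgebra-with-involution structure from Proposition \ref{AAinvolution} and partial symmetry from the preceding lemma, then reduce the two flexible identities for $A\oplus A$ to the expressions of Lemmas \ref{altlemma1} and \ref{flexlemma1}, grouped into fours and annihilated using flexibility, Lemma \ref{movestar}, Lemma \ref{lemmaflexible}, and partial symmetry. There is, however, one concrete misstep in your treatment of the first flexible identity: you claim that every group other than $(a,c,e)_\dashv + (e,c,a)_\vdash$ is literally of the form $\{\mathrm{sym}(-),-\}$, $\{-,\mathrm{sym}(-)\}$ or $\mathrm{sym}(\{-,-\})\dashv(-)$ and hence dies by partial symmetry alone. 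That identification is the one from Theorem \ref{alter}, and it is valid only modulo associativity of $A$. For example,
\[
( a \dashv d ) \dashv f^\ast - ( d \dashv f^\ast ) \vdash a + a \dashv ( f \vdash d^\ast ) - f \vdash ( d^\ast \vdash a )
= \{ a, \mathrm{sym}( d \dashv f^\ast ) \} + ( a, d, f^\ast )_\dashv + ( f, d^\ast, a )_\vdash ,
\]
and in a merely flexible $A$ the two leftover associators do not vanish individually; they vanish only as a pair, by the first expression of Lemma \ref{lemmaflexible}, which is precisely where the symmetric hypothesis \eqref{Juana} enters. So the first identity requires the same Lemma \ref{lemmaflexible} bookkeeping that you correctly describe for the second identity; partial symmetry alone does not suffice. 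This is easily repaired (the paper's proof records the residual associator pairs explicitly for every group of the first identity), but as written your argument for the first identity would not close.

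On the other hand, your part (ii) --- verifying that $A \oplus A$ itself satisfies the symmetric conditions \eqref{Juana} by expanding the associators of Lemma \ref{oldLemma5.8}, noting that the $b^\ast$-terms cancel in pairs under $b \mapsto -b$, and that the surviving terms assemble into associators and brackets with a $\mathrm{sym}$ entry, which vanish by \eqref{Juana} and partial symmetry in $A$ --- addresses a point the paper's proof passes over in silence; it is needed for the statement as literally formulated, and your sketch of it is sound.
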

 
\begin{proof}
It suffices to check that the expressions in Lemmas \ref{altlemma1} and \ref{flexlemma1} vanish
in every flexible symmetric 0-dialgebra.
As in the proof of Theorem \ref{alter}, we consider separately the groups of four terms involving 
the same variables. In every case, the result is 0 by the flexibility and symmetry of $A$, 
and applications of Lemmas \ref{movestar} and \ref{lemmaflexible}.
For the first identity we have:
  \allowdisplaybreaks
  \begin{align*}
  &
  ( a \dashv c ) \dashv e
  - a \dashv ( c \dashv e )
  + ( e \vdash c ) \vdash a  
  - e \vdash ( c \vdash a ) 
  =
  ( a, c, e )_\dashv 
  + ( e, c, a )_\vdash,
  \\
  &
  ( a \dashv d ) \dashv f^\ast
  - ( d \dashv f^\ast ) \vdash a     
  + a \dashv ( f \vdash d^\ast ) 
  - f \vdash ( d^\ast \vdash a )
  = 
  \\
  &\qquad
  \{ a, \mathrm{sym}( d \dashv f^\ast ) \}
  + ( f, d^\ast, a )_\vdash 
  + ( a, d, f^\ast )_\dashv,
  \\
  &
  ( c^\ast \vdash b ) \dashv f^\ast 
  - b \dashv ( f^\ast \vdash c^\ast ) 
  + ( c^\ast \dashv f ) \vdash b^\ast 
  - f \vdash ( b^\ast \dashv c^\ast ) 
  =
  \\
  &\qquad
  - \{ \mathrm{sym}( b \dashv f^\ast ), c^\ast \}
  + ( c^\ast, f, b^\ast )_\vdash + ( b, f^\ast, c^\ast )_\dashv 
  + ( f, b^\ast, c^\ast )_\times + ( c^\ast, b, f^\ast)_\times,
  \\
  &
  - b \dashv ( d^\ast \dashv e ) 
  + e \vdash ( b \dashv d^\ast ) 
  + ( e \vdash d ) \vdash b^\ast 
  - ( d \vdash b^\ast ) \dashv e
  =
  \\
  &\qquad
  - \{ \mathrm{sym}( b \dashv d^\ast ), e \}
  + ( e, d, b^\ast )_\vdash + ( b, d^\ast, e )_\dashv,
  \\
  &
  - a^\ast \dashv ( c^\ast \dashv f )  
  + ( c^\ast \vdash a^\ast ) \dashv f 
  + a \dashv ( c \dashv f )
  - ( c \vdash a ) \dashv f 
  =
  \\
  &\qquad
  \big( \mathrm{sym}( a \dashv c ) - \mathrm{sym}( a^\ast \dashv c^\ast ) \big) \dashv f
  + ( a^\ast, c^\ast, f)_\dashv - ( a, c, f)_\dashv,
  \\
  &
  - a^\ast \dashv ( e \vdash d ) 
  + a \dashv ( e^\ast \vdash d ) 
  + e \vdash ( a^\ast \dashv d ) 
  - e^\ast \vdash ( a \dashv d )
  = 
  \\
  &\qquad
  \big( \mathrm{sym}( a \dashv e^\ast ) 
  - \mathrm{sym}( a^\ast \dashv e ) \big) \dashv d
  - ( e, a^\ast, d )_\times 
  + ( e^\ast, a, d )_\times,
  \\
  &
  e \vdash ( c \vdash b ) 
  - ( c \dashv e ) \vdash b 
  + ( c^\ast \dashv e^\ast ) \vdash b 
  - e^\ast \vdash ( c^\ast \vdash b ) 
  =
  \\
  &\qquad
  \big( \mathrm{sym}( e \dashv c ) 
  - \mathrm{sym}( e^\ast \dashv c^\ast ) \big) \vdash b 
  + ( e^\ast, c^\ast, b )_\vdash - ( e, c, b )_\vdash,  
  \\
  &
  - ( b \dashv d^\ast ) \dashv f
  + ( f \vdash d^\ast ) \vdash b 
  - ( f \vdash d^\ast ) \vdash b 
  + ( b \dashv d^\ast ) \dashv f
  =
  0.
  \end{align*} 
For the second identity we have:
  \allowdisplaybreaks
  \begin{align*}
  &
  ( a \vdash c ) \dashv e
  - a \vdash ( c \dashv e ) 
  + ( e \vdash c ) \dashv a 
  - e \vdash ( c \dashv a ) 
  = 
  ( a, c, e )_\times 
  + ( e, c, a )_\times,
  \\
  &
  - f \vdash ( d^\ast \dashv a ) 
  + a \vdash ( f \vdash d^\ast )   
  - ( d \dashv  f^\ast ) \dashv a  
  + ( a \vdash  d ) \dashv f^\ast
  = 
  \\
  &\qquad
  - \{\mathrm{sym}( d \dashv f^\ast ), a \}  
  + ( f, d^\ast, a )_\times 
  + ( a, d, f^\ast )_\times,
  \\
  &
  -  f \vdash ( b^\ast \vdash c^\ast ) 
  + ( c^\ast \dashv f ) \dashv b^\ast 
  - b \vdash ( f^\ast \vdash c^\ast )
  + ( c^\ast \dashv b ) \dashv f^\ast 
  =
  \\
  &\qquad
  \{ c^\ast,  \mathrm{sym}( b \dashv f^\ast ) \}
  + ( f, b^\ast, c^\ast )_\vdash 
  + ( c^\ast, f, b^\ast)_\dashv 
  + ( c^\ast, b, f^\ast )_
  \dashv + ( b, f^\ast, c^\ast )_\vdash,
  \\
  &
  - ( d \dashv b^\ast ) \dashv e
  + ( e \vdash d ) \dashv b^\ast 
  - b \vdash ( d^\ast \dashv e^\ast ) 
  + e \vdash ( b \vdash d^\ast )
  =
  \\
  &\qquad
  - \{ \mathrm{sym}( b \dashv d^\ast ), e \}
  + ( e, d, b^\ast )_\times 
  + ( b, d^\ast, e )_\times,
  \\
  &
  ( c^\ast \dashv a^\ast ) \dashv f 
  - a^\ast \vdash ( c^\ast \dashv f )
  + a \vdash ( c \dashv f )
  - ( c \dashv a ) \dashv f
  =
  \\
  &\qquad
  \big( \mathrm{sym}( c^\ast \dashv a^\ast ) 
  - \mathrm{sym}( c \dashv a ) \big) \dashv 
  f + ( a^\ast, c^\ast, f )_\times 
  - ( a, c, f )_\times,
  \\
  &
  e \vdash ( a^\ast \vdash d )
  - a^\ast \vdash ( e \vdash d ) 
  + a \vdash ( e^\ast \vdash d ) 
  - e^\ast \vdash ( a \vdash d )
  = 
  \\
  &\qquad
  \big( \mathrm{sym}( e \dashv a^\ast ) 
  - \mathrm{sym}( e^\ast \dashv a ) \big) \vdash d
  + ( a^\ast, e, d )_\vdash 
  - ( e, a^\ast, d )_\vdash 
  \\
  &\qquad
  - ( a, e^\ast, d)_\vdash 
  + (e^\ast, a, d)_\vdash,
  \\
  &
  e \vdash ( c \dashv b )
  - ( c \dashv e ) \dashv b 
  + ( c^\ast \dashv e^\ast ) \dashv b  
  - e^\ast \vdash ( c^\ast \dashv b )
  =
  \\
  &\qquad
  \big( \mathrm{sym}( c^\ast \dashv e^\ast) 
  - \mathrm{sym}( c \dashv e) \big) \dashv b
  - ( e, c, b )_\times 
  + ( e^\ast, c^\ast, b )_\times,
  \\
  &
  - ( b \vdash d^\ast ) \dashv f 
  + ( f \vdash d^\ast ) \dashv b 
  - ( f \vdash d^\ast ) \dashv b 
  + ( b \vdash d^\ast ) \dashv f
  = 0.
  \end{align*} 
This completes the proof.
\end{proof}


\section{Two-dimensional dialgebras with involution}

In order to find interesting examples of the constructions described in 
the previous sections, we must first determine the most appropriate example 
of a 2-dimensional dialgebra to which we may apply the doubling process 
to obtain dialgebras generalizing the quaternions and octonions.
A classification of 2-dimensional associative dialgebras has been given 
recently by Mart\'in \cite{Martin}, but our assumptions are not the same, 
so we take a different approach.

We begin with a vector space $D$ with basis $\{ x, y \}$ over a field $\mathbb{F}$ which 
has two bilinear operations, denoted $\dashv$ and $\vdash$, and called the left and right 
products.
We need to determine the constraints on the structure constants so that   
  \begin{itemize}
  \item
$D$ is an associative dialgebra which is proper in the sense that the left 
and right products do not coincide.
  \item
$D$ is commutative: $a \dashv b \equiv b \vdash a$.
  \item
$D$ has an involution $a \mapsto a^\ast$.
  \end{itemize}
The structure constants $a,b,c,d,e,f,g,h \in \mathbb{F}$ for the left product are
  \[
  x \dashv x = ax + by, \quad
  x \dashv y = cx + dy, \quad
  y \dashv x = ex + fy, \quad
  y \dashv y = gx + hy.
  \]
Dicommutativity implies that the structure constants for the right product are
  \[
  x \vdash x = ax + by, \quad
  x \vdash y = ex + fy, \quad
  y \vdash x = cx + dy, \quad
  y \vdash y = gx + hy.
  \]
Without loss of generality we assume that $x^\ast = y$ and $y^\ast = x$, and hence
  \[
  ( ax + by )^\ast = bx + ay.
  \]
Imposing the left bar identities implies four equations:
  \allowdisplaybreaks
  \begin{alignat}{2}
  a c - a e + c d - c f &= 0, 
  &\qquad
  b c - b e + d^2  - d f &= 0,
  \label{first}
  \\
  c e + d g - e^2  - f g &= 0, 
  &\qquad
  c f + d h - e f - f h &= 0.
  \end{alignat}
By commutativity, imposing the right bar identities gives the same four equations.
Imposing left associativity implies 12 equations:
  \allowdisplaybreaks
  \begin{alignat}{2}
  &
  b (c - e) = 0, \qquad b (d - f) = 0, 
  &\qquad
  &
  b g - c d = 0, \qquad b g - e f = 0, 
  \\
  &
  g (c - e) = 0, \qquad g (d - f) = 0,
  \\
  &
  a c - a e - c f + d e = 0,
  &\qquad
  &
  a d - b c + b h - d^2 = 0,
  \\
  &
  a f - b e + b h - f^2 = 0,
  &\qquad
  &
  a g - c^2  + c h - d g = 0,
  \\
  &
  a g - e^2  + e h - f g = 0, 
  &\qquad
  &
  c f - d e + d h - f h = 0.
  \end{alignat}
By commutativity, imposing right associativity gives the same 12 equations. 
Imposing inner associativity gives three equations:
  \begin{equation}
  b g - d e = 0, \qquad
  a d - b c + b h - d f = 0, \qquad
  a g - c e + e h - f g = 0.
  \end{equation}
Imposing the involution identities gives four equations:
  \begin{equation}
  d - e = 0, \qquad
  c - f = 0, \qquad
  b - g = 0, \qquad
  a - h = 0.
  \label{last}
  \end{equation}
Solving the system of equations \eqref{first}--\eqref{last} produces three families of solutions:
  \allowdisplaybreaks
  \begin{alignat*}{9}
  &\bullet &\quad 
  a &= h,  &\;\; b &= g,  &\;\; c &= -g,  &\;\; d &= -g,  &\;\; 
  e &= -g,  &\;\; f &= -g,  &\;\; g &= \text{free},  &\;\; h &= \text{free}
  \\
  &\bullet &\quad 
  a &= h,  &\;\; b &= 0,  &\;\; c &=  h,  &\;\; d &=  0,  &\;\; 
  e &=  0,  &\;\; f &=  h,  &\;\; g &= 0,  &\;\;  h &= \text{free}
  \\
  &\bullet &\quad 
  a &= h,  &\;\; b &= h,  &\;\; c &=  h,  &\;\; d &=  h,  &\;\; 
  e &=  h,  &\;\; f &=  h,  &\;\; g &= h,  &\;\;  h &= \text{free}
  \end{alignat*}
The four fundamental solutions are as follows:
  \allowdisplaybreaks
  \begin{alignat*}{9}
  &(1) &\quad
  a &= 0, &\;\; b &= 1, &\;\; c &= -1, &\;\; d &= -1, &\;\; e &= -1, &\;\; f &= -1, &\;\; g &= 1, &\;\; h &= 0
  \\
  &(2) &\quad
  a &= 1, &\;\; b &= 0, &\;\; c &=  0, &\;\; d &=  0, &\;\; e &=  0, &\;\; f &=  0, &\;\; g &= 0, &\;\; h &= 1
  \\
  &(3) &\quad
  a &= 1, &\;\; b &= 0, &\;\; c &=  1, &\;\; d &=  0, &\;\; e &=  0, &\;\; f &=  1, &\;\; g &= 0, &\;\; h &= 1
  \\
  &(4) &\quad
  a &= 1, &\;\; b &= 1, &\;\; c &=  1, &\;\; d &=  1, &\;\; e &=  1, &\;\; f &=  1, &\;\; g &= 1, &\;\; h &= 1
  \end{alignat*}
Only the third satisfies the condition that the left and right products are distinct.

\begin{lemma} \label{2ddialgebra}
Up to a scalar multiple, there is a unique 2-dimensional 0-dialgebra which is
commutative and associative, has an involution, and has distinct operations;
its structure constants are as follows:
  \allowdisplaybreaks
  \begin{alignat*}{4}
  x \dashv x &= x, &\qquad
  x \dashv y &= x, &\qquad
  y \dashv x &= y, &\qquad
  y \dashv y &= y, \\
  x \vdash x &= x, &\qquad
  x \vdash y &= y, &\qquad
  y \vdash x &= x, &\qquad
  y \vdash y &= y.
  \end{alignat*}
\end{lemma}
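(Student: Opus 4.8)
The plan is to prove the statement as a classification result. I would parametrize every candidate by the eight structure constants $a,\dots,h$ for the left product relative to the basis $\{x,y\}$, and then translate each of the four requirements — dicommutativity, associativity, the bar identities, and the existence of the involution — into polynomial relations among these constants. Dicommutativity is imposed first, since it forces the right product to be determined by the left product (with the roles of the off-diagonal entries $x \dashv y$ and $y \dashv x$ exchanged); after normalizing the involution to $x^\ast = y$, $y^\ast = x$ without loss of generality, the remaining conditions yield exactly the explicit system \eqref{first}--\eqref{last} recorded above.

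The computational heart of the proof is solving this system. I would regard it as a polynomial ideal in $\mathbb{F}[a,\dots,h]$ and eliminate variables, either by hand casework on which of $b,g$ vanish or by a Gr\"obner basis computation, arriving at the three displayed families of solutions with their free parameters. The step I expect to be the main obstacle is confirming that these three families are exhaustive and that each genuinely satisfies all the imposed identities, since the associativity, inner-associativity, and bar conditions produce many overlapping quadratic constraints that must be reconciled carefully; a single missed branch would invalidate the uniqueness claim.

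With the full solution set in hand, the lemma follows by imposing the final requirement that $\dashv$ and $\vdash$ be distinct. Since $x \dashv y = cx + dy$ while $x \vdash y = ex + fy$, the two products coincide exactly when $(c,d) = (e,f)$. In the first and third families one has $c = d = e = f$ (equal to $-g$ and to $h$ respectively), so both force $\dashv\, =\, \vdash$ and are discarded; only the second family, in which $(c,d) = (h,0)$ and $(e,f) = (0,h)$, yields distinct products, and this occurs precisely when $h \ne 0$. Every member of that family equals $h$ times the fundamental solution $(3)$, so all proper solutions lie in a single scaling class, which gives uniqueness up to a scalar. Reading off the constants of solution $(3)$ — equivalently, recognizing it as the left/right projection dialgebra $a \dashv b = a$, $a \vdash b = b$ — produces the displayed multiplication table and completes the proof.
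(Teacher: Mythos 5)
Your proposal is correct and follows essentially the same route as the paper: the authors' argument is exactly the computation preceding the lemma (parametrize by the eight structure constants, let dicommutativity determine $\vdash$ from $\dashv$, normalize the involution to swap $x$ and $y$, impose the bar, associativity, inner associativity and involution identities to get the system \eqref{first}--\eqref{last}, solve to obtain the three families, and discard the families with $(c,d)=(e,f)$). Your final filtering step and the observation that the surviving family is the scalar multiples of solution $(3)$ match the paper's conclusion.
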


\begin{remark}
This dialgebra is the case $A = \mathbb{F}$ and $n = 2$ of Loday \cite[page 13, Example 2.2($h$)]{Loday3}.
It can be characterized by the fact that both $x$ and $y$ are bar-units.
We can write its structure constants using multiplication tables:
  \[
  \begin{array}{l|rrrr}
  \dashv & x & y \\
  \midrule
  x & x & x \\
  y & y & y
  \end{array}
  \qquad\qquad
  \begin{array}{l|rrrr}
  \vdash & x & y \\
  \midrule
  x & x & y \\
  y & x & y
  \end{array}
  \]
If we introduce the new basis $p = \frac12(x+y)$, $q = \frac12(x-y)$ then we obtain
  \[
  \begin{array}{l|rrrr}
  \dashv & p & q \\
  \midrule
  p & p & 0 \\
  q & q & 0
  \end{array}
  \qquad\qquad
  \begin{array}{l|rrrr}
  \vdash & p & q \\
  \midrule
  p & p & q \\
  q & 0 & 0
  \end{array}
  \]
We then see that $p$ is a bar-unit and $q$ is a bar-zero.
\end{remark}

We apply the doubling process to the dialgebra $D$ of Lemma \ref{2ddialgebra}
and obtain a 4-dimensional dialgebra $E$ with basis $\{p,q,r,s\}$ and these left and right products:
  \[
  \begin{array}{l|rrrr}
  \dashv & p & q & r & s \\
  \midrule
  p & p &\quad p &\;  s &\;  s \\
  q & q &\quad q &\;  r &\;  r \\
  r & r &\quad r &\; -q &\; -q \\
  s & s &\quad s &\; -p &\; -p
  \end{array}
  \qquad\qquad
  \begin{array}{l|rrrr}
  \vdash & p & q & r & s \\
  \midrule
  p & p &\quad q &\;  r &\;  s \\
  q & p &\quad q &\;  r &\;  s \\
  r & r &\quad s &\; -p &\; -q \\
  s & r &\quad s &\; -p &\; -q
  \end{array}
  \]
This is an associative dialgebra with involution defined by the equations
  \[
  p^\ast = q, \qquad
  q^\ast = p, \qquad
  r^\ast = -r, \qquad
  s^\ast = -s.
  \]
This dialgebra is not commutative; the corresponding Leibniz algebra has the following structure constants:
  \[
  \begin{array}{l|rrrr}
  [-,-] & p & q & r & s \\
  \midrule
  p & 0 &\quad 0 &\; -r+s &\; -r+s \\
  q & 0 &\quad 0 &\;  r-s &\;  r-s \\
  r & 0 &\quad 0 &\;  p-q &\;  p-q \\
  s & 0 &\quad 0 &\; -p+q &\; -p+q
  \end{array}
  \]
Applying the doubling process again produces an 8-dimensional dialgebra $F$ 
with basis $\{p,q,r,s,t,u,v,w\}$ and the left and right products given in Table \ref{dioctoniontable}.  
The involution is defined by the following equations:
  \[
  p^\ast =  q, \;\;
  q^\ast =  p, \;\;
  r^\ast = -r, \;\;
  s^\ast = -s, \;\;
  t^\ast = -t, \;\;
  u^\ast = -u, \;\;
  v^\ast = -v, \;\;
  w^\ast = -w.
  \]
This dialgebra is neither commutative nor associative, but it is alternative.
The Leibniz bracket makes the underlying vector space into a nonassociative algebra with 
structure constants given in Table \ref{dimalcevtable}.
  
  \begin{table}[h]
  \[
  \begin{array}{l|rrrrrrrr}
  \dashv & p & q & r & s & t & u & v & w \\
  \midrule
  p &  p &  p &  s &  s &  u &  u &  v &  v \\   
  q &  q &  q &  r &  r &  t &  t &  w &  w \\
  r &  r &  r & -q & -q & -v & -v &  u &  u \\
  s &  s &  s & -p & -p & -w & -w &  t &  t \\
  t &  t &  t &  v &  v & -q & -q & -s & -s \\
  u &  u &  u &  w &  w & -p & -p & -r & -r \\
  v &  v &  v & -t & -t &  r &  r & -p & -p \\
  w &  w &  w & -u & -u &  s &  s & -q & -q
  \end{array}
  \]
  \bigskip
  \[
  \begin{array}{l|rrrrrrrr}
  \vdash & p & q & r & s & t & u & v & w \\
  \midrule
  p &  p &  q &  r &  s &  t &  u &  v &  w \\
  q &  p &  q &  r &  s &  t &  u &  v &  w \\
  r &  r &  s & -p & -q & -v & -w &  t &  u \\
  s &  r &  s & -p & -q & -v & -w &  t &  u \\
  t &  t &  u &  v &  w & -p & -q & -r & -s \\
  u &  t &  u &  v &  w & -p & -q & -r & -s \\
  v &  w &  v & -u & -t &  s &  r & -q & -p \\
  w &  w &  v & -u & -t &  s &  r & -q & -p
  \end{array}
  \]
  \medskip
  \caption{Dialgebra analogue of the octonions}
  \label{dioctoniontable}
  \end{table}
 
  \begin{table}[h]
  \[
  \begin{array}{c|cccccccc}
  [-,-] & p & q & r & s & t & u & v & w \\
  \midrule
  p &
  0 &
  0 &
  -  r +  s &  
  -  r +  s &  
  -  t +  u &  
  -  t +  u &  
     v -  w &  
     v -  w 
  \\
  q &  
  0 &
  0 &  
     r -  s &  
     r -  s &  
     t -  u &  
     t -  u &  
  -  v +  w &  
  -  v +  w 
  \\
  r &  
  0 &
  0 &  
     p -  q &  
     p -  q &  
  -2 v &  
  -2 v &  
   2 u &  
   2 u 
  \\
  s &
  0 &
  0 &  
  -  p +  q &  
  -  p +  q &  
  -2 w &  
  -2 w &  
   2 t &  
   2 t 
  \\  
  t &
  0 &
  0 &  
   2 v &  
   2 v &  
     p -  q &  
     p -  q &  
  -2 s &  
  -2 s 
  \\  
  u & 
  0 &
  0 &  
   2 w &  
   2 w &  
  -  p +  q &  
  -  p +  q &  
  -2 r &  
  -2 r 
  \\  
  v &
  0 &
  0 &  
  -2 t &  
  -2 t &  
   2 r &  
   2 r &  
  -  p +  q &  
  -  p +  q 
  \\  
  w &
  0 &
  0 &  
  -2 u &  
  -2 u &  
   2 s &  
   2 s &  
     p -  q &  
     p -  q  
  \end{array}
  \]
  \medskip
  \caption{Leibniz bracket on 8-dimensional alternative dialgebra}
  \label{dimalcevtable}
  \end{table}


\section*{Acknowledgements}

R. Felipe-Sosa thanks the CIMAT in Guanajuato for its support and hospitality during July and August 2011. 
R. Felipe was supported by CONACyT grant 106923; he thanks M. R. Bremner and J. S\'anchez-Ortega for 
introducing him to the KP algorithm.
J. S\'anchez-Ortega was partially supported by the MINECO through project MTM2010-15223
and the Junta de Andaluc\'ia and Fondos FEDER through projects FQM 336 and FQM 02467.
M. R. Bremner thanks NSERC for financial support, and the ICIMAF in Havana for its hospitality during 
December 2011.
The last four authors thank BIRS, the Banff International Research Station, for the opportunity to work 
together during a Research in Teams program from 29 April to 6 May 2012.


\end{document}